\documentclass[fontsize=12pt,a4paper,headings=normal,
twoside=false,leqno,parskip=half-,abstract=true]{scrartcl}
\usepackage[english]{babel}
\usepackage[utf8]{inputenc}
\setlength{\textwidth}{15.5cm}
\setlength{\textheight}{23.5cm}
\usepackage{hyperref}
\hypersetup{
 pdftitle={Ginzburg-Landau patterns in circular and spherical geometries: vortices, attractors and spirals},
 pdfauthor={Jia-Yuan Dai and Phillipo Lappicy},
 colorlinks=true,
 linkcolor=blue,
 citecolor=blue,
 filecolor=blue,
 urlcolor=blue}

\usepackage{graphicx}
\usepackage[format=plain,labelfont=bf,font=small]{caption}
\usepackage{subfigure}
\usepackage{xcolor}
\usepackage[arrow, matrix, curve]{xy}
\usepackage{tikz}
\usepackage{float}

\usepackage{tikz-cd}
\usepackage{caption}
\captionsetup{font=footnotesize}

\usepackage{amsmath,amsthm}
\usepackage{amssymb} 
\usepackage[normalem]{ulem}

\newtheorem{theorem}{Theorem}[section]
\newtheorem{lemma}[theorem]{Lemma}

\newtheorem{prop}[theorem]{Proposition}

\theoremstyle{definition}

\theoremstyle{remark}
\newtheorem*{remark}{Remark}

\numberwithin{equation}{section}

\usepackage[notref,notcite,color,
final
]{showkeys}
\usepackage{lineno}
\newcommand*\patchAmsMathEnvironmentForLineno[1]{%
  \expandafter\let\csname old#1\expandafter\endcsname\csname #1\endcsname
  \expandafter\let\csname oldend#1\expandafter\endcsname\csname end#1\endcsname
  \renewenvironment{#1}%
     {\linenomath\csname old#1\endcsname}%
     {\csname oldend#1\endcsname\endlinenomath}}%
\newcommand*\patchBothAmsMathEnvironmentsForLineno[1]{%
  \patchAmsMathEnvironmentForLineno{#1}%
  \patchAmsMathEnvironmentForLineno{#1*}}%
\AtBeginDocument{%
\patchBothAmsMathEnvironmentsForLineno{equation}%
\patchBothAmsMathEnvironmentsForLineno{align}%
\patchBothAmsMathEnvironmentsForLineno{flalign}%
\patchBothAmsMathEnvironmentsForLineno{alignat}%
\patchBothAmsMathEnvironmentsForLineno{gather}%
\patchBothAmsMathEnvironmentsForLineno{multline}%
}

\definecolor{refkey}{rgb}{1,0,0}
\definecolor{labelkey}{rgb}{1,0,0}



\textwidth 16cm
\textheight 25cm 

\begin{document}

\title{{\huge{Ginzburg--Landau patterns \\ in circular and spherical geometries: \\ vortices, spirals and attractors}}}

\author{
 \\
{~}\\
Jia-Yuan Dai* and Phillipo Lappicy**\\
\vspace{2cm}}

\date{ }
\maketitle
\thispagestyle{empty}

\vfill

$\ast$\\
National Center of Theoretical Sciences, National Taiwan University\\
No. 1, Sec. 4, Roosevelt Rd., Astronomy-Mathematics Building,  106, Taipei, Taiwan\\

$\ast$ $\ast$\\
ICMC, Universidade de S\~ao Paulo\\
Av. trabalhador são-carlense 400, 13566-590, São Carlos, SP, Brazil\\
$\ast$ $\ast$\\
Instituto Superior T\'ecnico, Universidade de Lisboa\\
Av. Rovisco Pais, 1049-001 Lisboa, Portugal\\


\newpage
\pagestyle{plain}
\pagenumbering{arabic}
\setcounter{page}{1}

\begin{abstract}
\noindent
This paper consists of three results on pattern formation of Ginzburg--Landau $m$-armed vortex solutions and spiral waves in circular and spherical geometries. First, we completely describe the global bifurcation diagram of vortex equilibria. Second, we prove persistence of all bifurcation curves under perturbations of parameters, which yields the existence of spiral waves for the complex Ginzburg--Landau equation. Third, we explicitly construct the global attractor of $m$-armed vortex solutions. Our main tool is a new shooting method that allows us to prove hyperbolicity of vortex equilibria in the invariant subspace of vortex solutions.
\\

\noindent
\textbf{Keywords:} Ginzburg--Landau equation, $m$-armed vortex solutions, spiral waves, global attractors, shooting method, hyperbolicity. 
\\

\noindent
\textbf{AMS Subject Classification:} 35Q56, 37G35, 37G40.

\end{abstract}


\section{Introduction}

We consider the Ginzburg--Landau equation
\begin{equation} \label{intro:gl}
\Psi_t = \Delta_{\mathcal{M}} \Psi + \lambda \, (1-|\Psi|^2)\, \Psi,
\end{equation}
where $\Delta_{\mathcal{M}}$ is the Laplace--Beltrami operator on a compact surface of revolution $\mathcal{M}$ to be defined shortly. Here $\lambda > 0$ is a bifurcation parameter and the unknown function $\Psi$ is complex valued.

We are interested in understanding pattern formation on the surface $\mathcal{M}$ from the dynamics of \eqref{intro:gl}. For this purpose we prove three main results: the global bifurcation diagram of vortex equilibria, the existence of spiral waves, and the global attractor of vortex solutions that contains vortex transition waves. 

\textcolor{black}{Vortex solutions and spiral waves of \eqref{intro:gl} are special solutions, each of which consists of at least a vortex and isophase curves emitted from the vortex; see \cite{GoSt03}.} They play a key role in the dynamics of nonlinear fields in condensed matter physics; see \cite{Pi99}. In different contexts vortices are also called phase singularities, topological defects, and wave dislocations; see \cite{ArKr02, Pi06} for interpretations and applications in physics. From a mathematical point of view, the Ginzburg--Landau equation serves as the normal form for PDEs near the Hopf instability; see \cite{Mi02, Sc98}. Moreover, vortex solutions and spiral waves can be triggered by symmetry breaking; see \cite{ChLaMe90, ChLa00, Mu03, Tu52, Vanderbauwhede82}. For surveys and numerical evidences on Ginzburg--Landau vortex solutions and spiral waves, see \cite{ArKr02, FiSc03, Ts10}. 

Concerning mathematical analysis, it has been proved that vortex solutions and spiral waves exist on the extended plane $\mathbb{R}^2$; see \cite{Gr80, Ha82, KoHo81}. However, in experiments and numerical simulations the underlying domain is bounded, and moreover, the domain size and presence of boundary may affect the existence and the pattern of vortex solutions; see \cite{Bae03, GoSt03}. 
To analyze the role of these elements, we consider surfaces of revolution, among which disks and spheres are the simplest domains that differ topologically from each other, and then investigate how the topological structure of the domain influences the pattern. In \cite{Ts10}, it was proved that spiral waves exist for disks with Neumann boundary conditions. In \cite{Da20}, one of the authors generalized the existence result for circular geometries with Robin boundary conditions and spherical geometries. In particular, spherical geometries support patterns with two vortices (so-called $2$-tip spirals), whose global shape is topologically different from the vortex solutions on $\mathbb{R}^2$ and disks documented in the literature. We also mention that spiral waves on spheres governed by many other reaction-diffusion models are gaining increasing interest; see \cite{GoAm97, Maselko98, Maselko90, RoKa06, SiMa11, YaMiYa}.

In our mathematical setting the compact surface of revolution is defined as
\begin{equation} \label{po-coor}
\mathcal{M} := \{ ( a(s) \cos(\varphi), a(s) \sin(\varphi), \tilde{a}(s)) : s \in  [0, s_*], \, \varphi \in S^1 \}.    
\end{equation}
Our main examples are the unit disk when $a(s) = s$ and $\tilde{a}(s) = 0$ for $s \in [0,1]$, or the unit 2-sphere when $a(s) = \sin(s)$ and $\tilde{a}(s) = \cos(s)$ for $s \in [0,\pi]$.

In general, we consider the smoothness class of $\mathcal{M}$ to be $C^{2, \nu}$ with a fixed H\"{o}lder exponent $\nu \in (0,1)$, and thus $a(s)$ and $\tilde{a}(s)$ are $C^{2,\nu}$ functions. Without loss of generality, let $s$ be the arc length parameter and thus $(a'(s))^2 + (\tilde{a}'(s))^2 = 1$ for $s \in [0,s_*]$. We assume
\begin{equation} \label{positivity}
a(0) = 0 \mbox{ and } a(s)>0 \mbox{ for all } s \in (0, s_*).
\end{equation}
The smoothness of $\mathcal{M}$ prevents formation of a cusp at $s = 0$ and thus implies $\tilde{a}'(0) = 0$, or equivalently, $a'(0) = 1$ due to the arc length parametrization. 

We consider such a surface of revolution $\mathcal{M}$ because its $S^1$-symmetry in the $\varphi$-variable allows us to seek vortex solutions explained shortly. Moreover, the unit 2-sphere differs from the unit disk topologically. Hence we distinguish two cases, either $\partial \mathcal{M}$ is empty or nonempty, in order to study how topological structure affects the dynamics of vortex solutions.
Note that the boundary $\partial \mathcal{M}$ is empty if and only if $a(s_*) = 0$, and furthermore, the smoothness of $\mathcal{M}$ implies $a'(s_*) = -1$. In case that $\partial \mathcal{M}$ is empty, we further assume the following reflectional symmetry, \textcolor{black}{which is a technical assumption for the main result}:
\begin{equation} \label{refsym}
a(s) = a(s_* -s) \quad \text{ for all $s \in [0, s_*]$}.
\end{equation}
We adopt the functional setting $\Delta_{\mathcal{M}}: D(\Delta_\mathcal{M})  \rightarrow L^2(\mathcal{M}, \mathbb{C})$. Here the domain $D(\Delta_\mathcal{M})$ is chosen to be $H^2(\mathcal{M}, \mathbb{C})$, and if $\partial \mathcal{M}$ is nonempty, also equipped with the following Robin boundary conditions:
\begin{equation}\label{RobinBC}
\alpha_1  \Psi+\alpha_2  \nabla \Psi \cdot \textbf{n}= 0,
\end{equation}
where $\alpha_1, \alpha_2\in\mathbb{R}$ are not both zero and $\alpha_1 \alpha_2 \geq 0$. Here $\textbf{n}$ is the unit outer normal vector field on $\partial \mathcal{M}$. We require $\alpha_1 \alpha_2 \ge 0$ so that the real and imaginary parts of solutions do not grow at $\partial \mathcal{M}$. Note that Robin boundary conditions for the Ginzburg--Landau equation result from minimizing a free energy in the theory of superconductivity; see \cite{Duetal92, RiRu00}.

The Ginzburg--Landau equation (\ref{intro:gl}) possesses the \textit{global gauge symmetry}: 
\begin{equation} \label{gauge-sym}
\Psi \mbox{   is a solution of (\ref{intro:gl}) if and only if    } e^{i \omega} \Psi \mbox{   is a solution for each    } \omega \in S^1.
\end{equation}
This gauge together with the $S^1$-symmetry in the $\varphi$-variable of $\mathcal{M}$ allows us to seek solutions of the following form for each fixed $m \in \mathbb{N}$:
\begin{equation} \label{intro:ans}
    \Psi(t, s, \varphi) := u(t,s) e^{im \varphi}.
\end{equation}
Indeed, we define the following subspace, which is invariant under the dynamics of (\ref{intro:gl}):
\begin{equation}\label{L^2_m}
L_{m}^2(\mathbb{R}) := \{ \psi \in L^2(\mathcal{M}, \mathbb{C}): \psi(s, \varphi) = u(s) e^{im \varphi}, \mbox{   } u(s) \in \mathbb{R}\}.
\end{equation}
The first step to analyze the dynamics of the Ginzburg--Landau equation (\ref{intro:gl}) restricted to $L_{m}^2(\mathbb{R})$ is to study time-independent solutions given by $\Psi(t, s, \varphi) =\psi (s,\varphi)$, which we call \textit{$m$-armed vortex equilibria}. They satisfy the elliptic equation
\begin{equation} \label{intro:ell}
0 = \Delta_{\mathcal{M}} \psi + \lambda \, (1- |\psi|^2) \, \psi. 
\end{equation}

For each vortex equilibrium of \eqref{intro:ell}, \textcolor{black}{we exhibit its \emph{pattern} as isophase lines, and here we consider the level set of when its imaginary part on $\mathcal{M}$ is equal to zero. Hence the pattern is given by the relation $m \varphi = 0$ (mod $\pi$) due to \eqref{intro:ans}. The $2\pi$-periodicity of angle $\varphi$ yields the relation $\varphi =  \varphi_\ell =  \ell \pi/m$ (mod $2\pi$) for $\ell = 0,1,..., 2m-1$, which allows us to plot the pattern on $\mathcal{M}$ via the coordinates \eqref{po-coor}}. Continuity of $\psi$ implies that $u(\textcolor{black}{0}) = 0$, and also $u(s_*) = 0$ if $\partial_{\mathcal{M}}$ is empty. \textcolor{black}{Since vortices are phase singularities, that is, zeros of $\psi$ at which the phase field of $\psi$ undergoes a jump discontinuity}, the vortices reside at $s = 0$, and also at $s = s_*$ if $\partial \mathcal{M}$ is empty. For more details on pattern formation see \cite{FiSc03, GoSt03}. Therefore, spherical geometries support patterns with two vortices, and thus the topology of the domain impacts on the global shape of patterns. Nevertheless, due to the special form (\ref{intro:ans}) of solutions, the pattern of vortex equilibria always hits boundary points in normal direction, and thus its local shape near the boundary looks the same; see \cite[Lemma 2.3]{Da20} and Figure \ref{FIGvortex}.
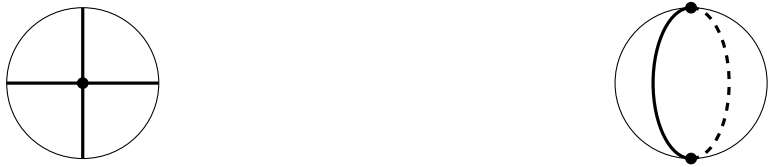
\begin{figure}[H] \label{fig-1}
\minipage{0.5\textwidth}\centering
    \begin{tikzpicture}[scale=1]
    \draw (0,0) circle (1cm);
    \draw[very thick] (-1,0) -- (1,0);
    \draw[very thick] (0,1) -- (0,-1);    
    \filldraw [black] (0,0) circle (2pt);
    \end{tikzpicture}
\endminipage\hfill
\minipage{0.5\textwidth}\centering
    \begin{tikzpicture}[scale=1]
    \draw (0,0) circle (1cm);
    \draw[very thick] (0,1) arc (90:270:0.5cm and 1cm);
    \draw[very thick, dashed] (0,-1) arc (-90:90:0.5cm and 1cm);
    \filldraw [black] (0,1) circle (2pt);
    \filldraw [black] (0,-1) circle (2pt);    
    \end{tikzpicture}
\endminipage
\caption{On the left, a $2$-armed vortex pattern on the disk with the origin as the vortex. On the right, a $1$-armed vortex pattern on the sphere with the north and south poles as the vortices.} \label{FIGvortex}
\end{figure}
The existence of nontrivial vortex equilibria $\psi\in C^{2, \nu}(\mathcal{M}, \mathbb{C})$ has been proved in \cite{Da20} by bifurcation analysis as the parameter $\lambda > 0$ changes. As a consequence, $u(s)$ is a nontrivial smooth solution of the following ODE for $s \in (0,s_*)$:
\begin{equation} \label{cheq1}
0 = u'' + \frac{a'}{a} u' - \frac{m^2}{a^2}u + \lambda \,(1-u^2) \, u, \quad \bigg( u' := \frac{\mathrm{d}u}{\mathrm{d}s}\bigg), 
\end{equation}
and Robin boundary conditions (\ref{RobinBC}) are equivalent to 
\begin{equation}\label{Robin-u}
\alpha_1 \, u(s_*) + \alpha_2 \, u'(s_*) = 0.
\end{equation}
It has been proved in \cite{Da20} that nontrivial $m$-armed vortex equilibria form countably many supercritical pitchfork bifurcation curves as the parameter $\lambda$ crosses the eigenvalues $\lambda_k$ of $-\Delta_{\mathcal{M}}$ restricted to $L_{m}^2(\mathbb{R})$ that can be ordered as follows:
\begin{equation}
0 < \lambda_0 < \lambda_1 < ... < \lambda_k < ..., \quad \lim_{k \rightarrow \infty} \lambda_k = \infty.    
\end{equation}
Nevertheless, only the principal curve was proved to be global, \textcolor{black}{in the sense that it exists for all $\lambda > \lambda_0$}. We are able to extend all other bifurcation curves globally, \textcolor{black}{i.e., bifurcation curves exist for all $\lambda$ strictly larger than the bifurcation value.} 

\begin{theorem}\label{cor1} \emph{\textbf{Global Bifurcation of Vortex Equilibria}}. All bifurcation curves of \eqref{intro:ell} are global. More precisely, for each \textcolor{black}{$k \in \mathbb{N}_0$ and} $\lambda\in(\lambda_k,\lambda_{k+1})$ there are $2k+2$ nontrivial $m$-armed vortex equilibria denoted by $\psi_j^\pm(s,\varphi) = u_j^{\pm}(s) e^{im\varphi}$ with $j = 0,1,...,k$; see Figure \ref{FIGbif}.
\end{theorem}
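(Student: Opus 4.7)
The plan is to combine Rabinowitz-type global bifurcation with an $L^\infty$ a priori bound, a Sturm-type nodal argument, and the new shooting method giving hyperbolicity of $m$-armed vortex equilibria. The local supercritical pitchfork branches $\mathcal{C}_k$ bifurcating from $(\lambda_k,0)$ are already established in \cite{Da20}; what remains is to extend each $\mathcal{C}_k$ globally in $\lambda$ and to obtain the exact count $2k+2$ for $\lambda\in(\lambda_k,\lambda_{k+1})$.

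First I would apply the global Rabinowitz alternative to each $\mathcal{C}_k\subset\mathbb{R}\times L^2_m(\mathbb{R})$: the maximal connected continuum is either unbounded, or it returns to another trivial bifurcation point $(\lambda_j,0)$ with $j\neq k$. To rule out the second alternative, I would invoke ODE uniqueness for (\ref{cheq1}) on $(0,s_*)$: a double zero $u(s_0)=u'(s_0)=0$ would force $u\equiv 0$, so all interior zeros of any nontrivial solution are simple and the number of interior zeros is locally constant along $\mathcal{C}_k$. Near $(\lambda_k,0)$ this count equals the nodal index of the $k$-th Sturm--Liouville eigenfunction, namely $k$, so $\mathcal{C}_k$ cannot meet any $(\lambda_j,0)$ with $j\neq k$. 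To exclude blow-up I would establish $\|u\|_\infty\leq 1$ by the maximum principle: at an interior maximum $s_0$ of $u^2$ with $u(s_0)^2>1$, multiplying (\ref{cheq1}) by $u$ and using $uu'(s_0)=0$, $uu''(s_0)\leq 0$, $a(s_0)>0$, together with the Robin sign condition $\alpha_1\alpha_2\geq 0$ at a boundary maximum, contradicts the equation. Elliptic regularity upgrades this to a $C^{2,\nu}$ bound, so the only way $\mathcal{C}_k$ can be unbounded is in the $\lambda$-direction, and its $\lambda$-projection is the connected unbounded set $[\lambda_k,\infty)$.

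Next I would deploy the shooting method to obtain hyperbolicity of every nontrivial $m$-armed vortex equilibrium within $L^2_m(\mathbb{R})$. Hyperbolicity, via the implicit function theorem, forces $\mathcal{C}_k$ to be a smooth curve without folds or secondary bifurcations over $(\lambda_k,\infty)$, and, more importantly, it gives uniqueness per nodal class: for fixed $\lambda$ there is at most one $m$-armed equilibrium (modulo the $\mathbb{Z}_2$-symmetry $u\mapsto -u$ inherited from the gauge) with a prescribed number of interior zeros. Combining uniqueness within each nodal class with the existence produced in the previous paragraph yields, for $\lambda\in(\lambda_k,\lambda_{k+1})$, exactly one pair $u_j^\pm$ of nontrivial equilibria for each $j\in\{0,1,\dots,k\}$, giving $2(k+1)=2k+2$ solutions and proving the theorem.

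The main obstacle is the uniqueness step inside each nodal class, as steps (i)--(iii) alone only give existence and a lower bound on the count. This is precisely where the new shooting method becomes indispensable: one has to set it up so that the interior-zero count becomes a coordinate on a low-dimensional shooting manifold and the Robin condition (\ref{Robin-u}) cuts it transversally, forcing at most one intersection per nodal sector. A secondary difficulty is proving hyperbolicity \emph{uniformly} along the entire branch rather than generically, which in turn requires a delicate analysis of the linearization of (\ref{cheq1}) near $s=0$ and $s=s_*$, exploiting both the asymptotic behaviour $u(s)\sim c\, s^m$ forced by smoothness at the pole and the reflectional symmetry (\ref{refsym}) when $\partial\mathcal{M}$ is empty.
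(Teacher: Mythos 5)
Your route to globality is sound in outline but genuinely different from the paper's. The paper never invokes the Rabinowitz alternative: it runs an open--closed continuation in the parameter $\lambda$ along each half-branch $\mathcal{C}_{k,\iota}$. Openness (Lemma \ref{openness}) is the implicit function theorem, whose hypothesis --- triviality of the kernel of the linearization \eqref{linear-at-solution} --- is precisely the hyperbolicity of Lemma \ref{main-thm}; closedness (Lemma \ref{closedness}) is a compactness argument built on the maximum-principle bound \eqref{priori}, which you also use. Your Rabinowitz-plus-nodal-count scheme reaches the same conclusion (an unbounded continuum whose $\lambda$-projection contains $(\lambda_k,\infty)$, since it cannot reconnect to a trivial bifurcation point of a different nodal index and cannot blow up in norm) without needing hyperbolicity for the existence half, at the price of setting up degree theory for the singular Sturm--Liouville operator; the paper's continuation argument avoids degree theory entirely but leans on hyperbolicity from the very first step.

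The genuine gap is in your counting step. You assert that hyperbolicity ``gives uniqueness per nodal class,'' but hyperbolicity is a nondegeneracy statement about each individual equilibrium --- transversality of each intersection of the shooting curve with the boundary-condition line --- and by itself does not exclude two distinct hyperbolic equilibria with the same number of interior zeros, for instance on an isola disconnected from the trivial solution. What actually pins down the exact count of $2k+2$ is the monotonicity Lemma \ref{lem:monot}: along the unstable shooting curve the angle $\mu$ and the radius $\rho$ are strictly monotone in the shooting parameter $d$, so the section $M^u_{\hat s}$ is a monotone spiral whose total winding is governed by the position of $\lambda$ relative to the eigenvalues $\lambda_k$, and this controls the \emph{number} of intersections with the Robin line $L^{\alpha_1,\alpha_2}_{s_*}$ (or with $M^s_{s_*/2}$ in the boundaryless case), not merely their transversality. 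You correctly flag this as ``the main obstacle,'' but the tool you would need to close it is monotonicity of the shooting curve in $d$, not hyperbolicity; as written, your argument only delivers a lower bound of $2k+2$ solutions.
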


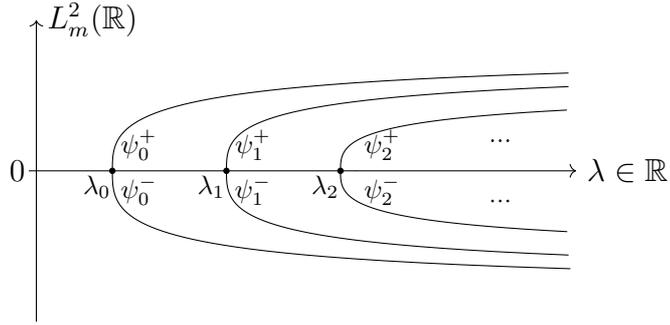
\begin{figure}[H]
\centering
\begin{tikzpicture}[scale=1]
        \draw[->] (-0.1,0) -- (7.1,0) node[right] {$\lambda\in\mathbb{R}$};
        \draw[->] (0,-2) -- (0,2) node[right] {$L^2_m(\mathbb{R})$};
        
        \filldraw [black] (0,0) circle (0.01pt) node[anchor=east]{$0$};        
        \draw[domain=-1.298:1.298,samples=100] plot ({1+\x^2*tan((\x)r)},\x);
        \filldraw [black] (1,0) circle (1pt);\draw (0.8,-0.5) node[above]{\footnotesize{$\lambda_0$}}; 

        \draw[domain=-1.241:1.241,samples=100] plot ({2.5+\x^2*tan((\x)r)}, 0.9*\x);
        \filldraw [black] (2.5,0) circle (1pt);\draw (2.3,-0.5) node[above]{\footnotesize{$\lambda_1$}};

        \draw[domain=-1.152:1.152,samples=100] plot ({4+\x^2*tan((\x)r)}, 0.7*\x);
        \filldraw [black] (4,0) circle (1pt);
        \filldraw [black] (4.2,0) circle (0.01pt);\draw (3.8,-0.5) node[above]{\footnotesize{$\lambda_2$}};

        \filldraw [black] (6,0.4) circle (0.3pt);
        \filldraw [black] (6.1,0.4) circle (0.3pt);
        \filldraw [black] (6.2,0.4) circle (0.3pt);      

        \filldraw [black] (6,-0.4) circle (0.3pt);
        \filldraw [black] (6.1,-0.4) circle (0.3pt);
        \filldraw [black] (6.2,-0.4) circle (0.3pt);  
        
        \draw (1.35,0) node[above]{\footnotesize{$\psi_0^+$}};  
        \draw (2.85,0) node[above]{\footnotesize{$\psi_1^+$}};  
        \draw (4.55,0) node[above]{\footnotesize{$\psi_2^+$}};  
        
        \draw (1.35,-0.6) node[above]{\footnotesize{$\psi_0^-$}};  
        \draw (2.85,-0.6) node[above]{\footnotesize{$\psi_1^-$}};  
        \draw (4.55,-0.6) node[above]{\footnotesize{$\psi_2^-$}};          
\end{tikzpicture}
\caption{The supercritical pitchfork bifurcation of the trivial equilibrium possesses global branches, yielding all $m$-armed vortex equilibria. The pitchfork shape represents the $\mathbb{Z}_2$-symmetry: $(\lambda,\psi)$ is a solution of (\ref{intro:ell}) if and only if $(\lambda,$$- \psi)$ is a solution. All bifurcation curves are indexed by the number of zeros of the amplitude $u_k$.
}
\label{FIGbif}
\end{figure}

We next pursue two different directions to obtain time-dependent patterns originated from vortex equilibria. Towards the first direction, we prove the existence of \textit{spiral waves} for the complex Ginzburg--Landau equation by means of perturbating vortex solutions. As for the second direction, we construct \emph{vortex transition waves} of the  Ginzburg--Landau equation (\ref{intro:gl}), which are eternal solutions that exist for all $t\in\mathbb{R}$ and converge to different vortex equilibria when $t\to -\infty$ and $t\to \infty$.

On the one hand, we prove the existence of spiral waves governed by the complex Ginzburg--Landau equation
\begin{equation} \label{intro:cgl}
\Psi_t = (1+i\, \eta) \Delta_{\mathcal{M}} \Psi + \lambda \, (1- |\Psi|^2 - i \, \beta \,| \Psi|^2)\, \Psi
\end{equation}
with the complex diffusion parameter $\eta \in \mathbb{R}$ and the kinetic parameter $\beta\in\mathbb{R}$. We seek \textcolor{black}{\textit{$m$-armed}} \textit{spiral wave} solutions of the following form:
\begin{equation} \label{spi-ans}
\Psi(t, s, \varphi) := e^{-\Omega t} \, u(s) \, e^{im \varphi},
\end{equation}
where $u(s)$ is now a complex-valued function whose argument is not piecewise constant, and thus the pattern exhibited by $\Psi$ is a twisted spiral; see Figure \ref{FIGspiral}. Here the rotation frequency $\Omega \in \mathbb{R}$ is an unknown quantity to be determined. Note that (\ref{intro:gl}) corresponds to $\eta = \beta = 0$\textcolor{black}{, and moreover, the case $\eta = \beta \neq 0$ is reduced to the special case $\eta = \beta = 0$ as we choose $\Omega = \lambda \eta$ in \eqref{spi-ans}.}

\begin{figure}[H]
\minipage{0.45\textwidth}\centering
    \begin{tikzpicture}[scale=1]
    \draw[->] (0.3,1.3) arc (0:145:0.25cm and 0.1cm);

    \draw (0,0) circle (1cm);
    \draw[very thick, domain=-2:2,samples=100] plot (0.5*\x, {0.15*sin((\x)r)});
    \draw[very thick, domain=-2:2,samples=100] plot ({0.15*sin((\x)r)},-0.5*\x);
    \filldraw [black] (0,0) circle (2pt);
    \end{tikzpicture}
\endminipage\hfill
\minipage{0.45\textwidth}\centering
    \begin{tikzpicture}[scale=1]
    \draw[->] (-0.2,1.4) arc (180:450:0.2cm and 0.1cm);
    
    \draw (0,0) circle (1cm);
    \draw[very thick, domain=-3.14:3.14,samples=100] plot ({0.15*sin((\x)r)},-0.3*\x);
    \draw[very thick, dashed,domain=-3.14:3.14,samples=100] plot ({0.15*sin((\x)r)},0.3*\x);
    \filldraw [black] (0,1) circle (2pt);
    \filldraw [black] (0,-1) circle (2pt);    
    \end{tikzpicture}
\endminipage
\captionof{figure}{On the left, a $2$-armed spiral pattern on the disk with the origin as the vortex. On the right, a $1$-armed spiral pattern on the sphere with the north and south poles as the vortices. Both spiral patterns may rotate with respect to the axis of rotation of the surface $\mathcal{M}$ with the rotation frequency $\Omega$.} \label{FIGspiral}
\end{figure}
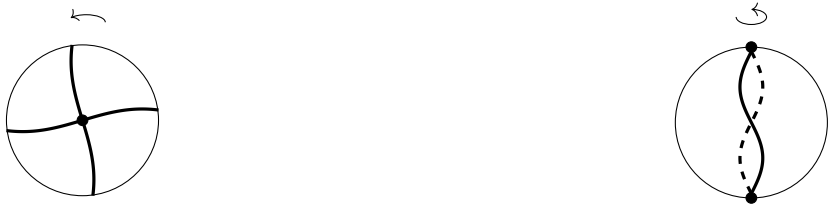

We can prove the existence of spiral waves with a perturbation argument in the extended invariant subspace
\begin{equation}
L_{m}^2(\mathbb{C}) := \{ \psi \in L^2(\mathcal{M}, \mathbb{C}): \psi(s, \varphi) = u(s) e^{im \varphi}, \mbox{   } u(s) \in \mathbb{C}\}.
\end{equation}
The following result generalizes \cite[Theorem 1.5]{Da20}. 

\begin{theorem}\label{cor3} \emph{\textbf{Existence of Spiral Waves}}.
For \textcolor{black}{each $k \in \mathbb{N}_0$ and} $\lambda \in (\lambda_k, \lambda_{k+1})$ there exists an $\epsilon > 0$ such that the complex Ginzburg--Landau equation \emph{(\ref{intro:cgl})} possesses \textcolor{black}{$k+1$ distinct \emph{(}up to a gauge symmetry defined in \eqref{gauge-sym}\emph{)}} nontrivial spiral wave solutions for each $\eta,\beta\in (-\epsilon,\epsilon)$ and $\eta \neq \beta$.
\textcolor{black}{Moreover, we classify the types of patterns as shown in Figure \ref{FIGparameter}.} 
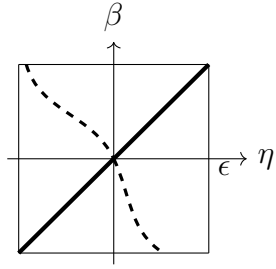
\begin{figure}[H]
\begin{center}
        \begin{tikzpicture}[scale=0.4]
    \draw[->] (-2.8,0) -- (3.5,0) node[right] {$\eta$};
    \draw[->] (0,-2.8) -- (0,3.1) node[above] {$\beta$};    
    \draw[-] (-2.5,-2.5) -- (-2.5,2.5);
    \draw[-] (-2.5,2.5) -- (2.5,2.5);
    \draw[-] (2.5,2.5) -- (2.5,-2.5);
    \draw[-] (2.5,-2.5) -- (-2.5,-2.5);
     \node[right] at (2.45,-0.25) {$\epsilon$};
     \draw[ultra thick,-] (-2.5,-2.5) -- (2.5,2.5);
     \draw[very thick, dashed]  (-2.3,2.5) to [out=286,in=115] (0,0) ;
     \draw[very thick, dashed] (0,0) to [out=295,in=146] (1.3,-2.5);
    \end{tikzpicture}
    \end{center}
   \caption{The complex Ginzburg--Landau equation (\ref{intro:cgl}) possesses different types of patterns in the $(\eta,\beta)$-parameter space, \textcolor{black}{according to \cite[Lemma 5.5]{Da20}}. Vortex patterns as shown in Figure \ref{FIGvortex} appear for each parameter on the bold diagonal line. Each parameter that is not on the bold line yields a spiral pattern as shown in Figure \ref{FIGspiral}. Such spiral patterns are rotating, i.e., $\Omega \neq 0$, if and only if parameters do not lie on the dashed line.
   }
   \label{FIGparameter}
\end{figure}
\end{theorem}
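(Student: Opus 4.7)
\emph{Proof proposal.} The plan is to apply the implicit function theorem (IFT) to the reduced spiral wave equation in the invariant subspace $L^2_m(\mathbb{C})$, treating $(\eta,\beta)$ as small perturbation parameters around each of the $2k+2$ real vortex equilibria $u_j^\pm$ furnished by Theorem~\ref{cor1}, and using the extra scalar unknown $\Omega$ together with a single gauge-fixing condition to invert the linearization.

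Substituting the spiral wave ansatz into (\ref{intro:cgl}) reduces the problem (after dividing out the rotating time factor) to the complex boundary value problem
\begin{equation*}
F(\eta,\beta,\Omega,u) := (1+i\eta)\Big(u''+\tfrac{a'}{a}u'-\tfrac{m^2}{a^2}u\Big) + \lambda u - \lambda(1+i\beta)\abs{u}^2 u + i\Omega\, u = 0
\end{equation*}
on $(0,s_*)$, with $u\in H^2$ complex-valued and satisfying (\ref{Robin-u}). At the base points $(\eta,\beta,\Omega,u)=(0,0,0,u_j^\pm)$ one has $F=0$ by Theorem~\ref{cor1}. The global gauge $u\mapsto e^{i\theta}u$ forces $iu_j^\pm \in \ker D_u F$, so I would remove this redundancy by imposing the single real slice condition $c(u) := \langle \mathrm{Im}(u), u_j^\pm\rangle_a = 0$, where $\langle\cdot,\cdot\rangle_a$ is the $L^2$ inner product with weight $a(s)$. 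Writing $v=v_R+iv_I$, the linearization splits into a real operator $L_R v_R := \mathcal{L}v_R + \lambda(1-3(u_j^\pm)^2)v_R$ and an imaginary operator $L_I v_I := \mathcal{L}v_I + \lambda(1-(u_j^\pm)^2)v_I$, where $\mathcal{L}w := w''+\tfrac{a'}{a}w'-\tfrac{m^2}{a^2}w$.

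The operator $L_R$ is precisely the linearization of the real vortex ODE (\ref{cheq1}), whose invertibility is \emph{equivalent} to the hyperbolicity of $u_j^\pm$ in $L^2_m(\mathbb{R})$ established by the new shooting method underpinning Theorem~\ref{cor1}; this is the decisive analytic input. The operator $L_I$ is self-adjoint with respect to the $a$-weighted inner product and has kernel and cokernel both equal to $\mathrm{span}(u_j^\pm)$, the familiar gauge obstruction. The scalar $\Omega$ enters the linearization only through the imaginary direction $\delta\Omega\, u_j^\pm$, exactly hitting the cokernel of $L_I$. Combined with the slice condition $c=0$ killing the kernel, the augmented operator $D_{(\Omega,u)}(F,c)$ at each base point is an isomorphism: $v_R$ is determined from the real equation by $L_R^{-1}$; $\delta\Omega$ is determined by projecting the imaginary equation onto $u_j^\pm$; and $v_I$ in the slice is determined by inverting $L_I$ on $(u_j^\pm)^\perp$.

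IFT then yields, for each $j=0,\ldots,k$ and each sign, a unique smooth branch $(u(\eta,\beta), \Omega(\eta,\beta))$ of spiral wave solutions on a common neighborhood $\abs{\eta},\abs{\beta}<\epsilon$, producing $2k+2$ distinct solutions. The exclusion $\eta\neq\beta$ is required because on the diagonal a time-dependent phase rotation converts (\ref{intro:cgl}) into the real equation (\ref{intro:gl}), so the IFT branches collapse there to rotated copies of the real vortex equilibria rather than genuinely twisted spirals; off the diagonal one checks from the explicit solvability formula for $\Omega$ and the leading-order expansion of $v_I$ that $\arg u(s)$ is non-constant, distinguishing spirals from vortices. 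The hardest point is the invertibility of $L_R$, which rests entirely on the hyperbolicity provided by the shooting method; everything else is standard Lyapunov--Schmidt bookkeeping.
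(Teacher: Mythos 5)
Your proposal is correct and follows essentially the same route as the paper: both perturb off the $2k+2$ hyperbolic real vortex equilibria, decouple the linearization into a real block (invertible precisely by the hyperbolicity Lemma \ref{main-thm}) and an imaginary block whose kernel and cokernel are the gauge direction, and use the extra unknown $\Omega$ to cover that cokernel. The only cosmetic difference is that you impose an explicit slice condition and do the Lyapunov--Schmidt bookkeeping by hand, whereas the paper packages the same structure into the equivariant implicit function theorem of Recke--Peterhof and defers the $\eta\neq\beta$ pattern classification to Lemma 5.5 of \cite{Da20}.
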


Spiral waves proved in Theorem \ref{cor3} are rigidly rotating from the nature of the solution form (\ref{intro:ans}). It is worth noting that they serve as ideal reference solutions for secondary bifurcations to more intricate patterns; see \cite{ArKr02, Fietal07, FiSc03, SaSc01} for spirals with meandering and even drifting vortices. It is also interesting to study the dynamics of vortices and decide whether two different vortices annihilate; see \cite{Ch13, Lin96}.

On the other hand, we describe the global asymptotic dynamical behaviour of $m$-armed vortex solutions. In particular, we construct \textit{vortex transition waves}, which are heteroclinic orbits that converge to different vortex equilibria as $t\to -\infty$ and $t\to \infty$. Indeed, the Ginzburg--Landau equation (\ref{intro:gl}) restricted to $L_{m}^2(\mathbb{R})$ is equivalent to the PDE,
\begin{equation} \label{cheq2}
u_t=u_{ss} + \frac{a_s}{a} u_s - \frac{m^2}{a^2}u + \lambda \,(1-u^2) \, u \quad \mbox{for   } s\in (0,s_*).  
\end{equation}

\textcolor{black}{To illustrate dynamical properties of solutions of the PDE \eqref{cheq2}, recall that \eqref{cheq2} is the restriction of the Ginzburg--Landau equation \eqref{intro:gl} to the invariant subspace $L_m^2(\mathbb{R})$, and thus \eqref{cheq2} inherits the following aspects of \eqref{intro:gl}. First, \eqref{intro:gl} generates a compact \textit{semiflow} on the interpolation space $H^{2 \gamma} (\mathcal{M}, \mathbb{C})$ for any exponent $\gamma > 1/2$, according to \cite[Theorem 3.3.3]{He81} and the Schauder elliptic regularity theory. Second, the semiflow is \emph{dissipative}, in the sense that all solutions of \eqref{intro:gl} eventually stay in a fixed ball in $H^{2\gamma}(\mathcal{M}, \mathbb{C})$; see \cite{Carvalho}. 
By generating a compact dissipative semiflow, \eqref{intro:gl} possesses a \emph{global attractor} $\mathcal{A} \subset H^{2\gamma}(\mathcal{M}, \mathbb{C})$, which is defined as the maximal compact invariant set; see \cite{Carvalho}. Alternatively, the global attractor can be characterized as the minimal set that attracts all bounded sets, or the set of all global bounded solutions of (\ref{intro:gl}); see \cite[Chapter 2]{BabinVishik92}.}

\textcolor{black}{The global attractor $\mathcal{A}$ has gradient structure and thereby consists of vortex equilibria and vortex transition waves (as heteroclinic orbits), since it is associated with the following \textit{strict Lyapunov function} (also see \cite{ArKr02}):
\begin{equation} \label{complex_energy2}
\mathcal{E}[\Psi] := \int_\mathcal{M}  |\nabla \Psi|^2 - \lambda \left( |\Psi|^2 - \frac{|\Psi|^4}{2} \right) \, \mathrm{d}V + \frac{\alpha_1}{\alpha_2} \int_{\partial \mathcal{M}} |\Psi|^2 \, \mathrm{d}S.
\end{equation}
Here $\mathrm{d}V$ and $\mathrm{d}S$ stand for the volume and area elements on $\mathcal{M}$, respectively. 
Note that the boundary integral is absent if $\partial \mathcal{M}$ is empty, or in case of either Neumann ($\alpha_1 = 0$) or Dirichlet ($\alpha_2 = 0$) boundary conditions. In the invariant subspace $L_m^2(\mathbb{R})$, we obtain the restricted semiflow generated by \eqref{cheq2} with the attractor of $m$-armed vortex solutions, denoted by $\mathcal{A}_m\subseteq H^{2\gamma}(\mathcal{M}, \mathbb{C}) \cap L_m^2(\mathbb{R})$.
}

We seek to construct $\mathcal{A}_m$ by stating sufficient and necessary conditions to describe which $m$-armed vortex equilibria in Theorem \ref{cor1} are connected by a vortex transition wave, as in \cite{FiedlerRocha96}.
When $m = 0$ and $\mathcal{M}$ is the unit 2-sphere, (\ref{cheq2}) describes certain self-similar Schwarzschild solutions of the Einstein constraint equations, whose global attractor was constructed in \cite{LappicyBlackHoles, LappicySing}. 
For $m \in \mathbb{N}$, in the upcoming theorem we explicitly construct the global attractor $\mathcal{A}_m$. 

We provide a few nomenclatures before the next result. Let $u_*=u_*(s)$ be an equilibrium of the equation \eqref{cheq2}. 
First, we denote the \emph{Morse index} by $i(u_*)$, which is equal to the unstable dimension of $u_*$, i.e., the number of positive eigenvalues of the associated linearized operator at $u_*$. 
Second, we denote by the \emph{zero number} $z(u_*)$ the number of strict sign changes of $u_*(s)$. 
\textcolor{black}{Lastly, we specify the concept of adjacency depending on whether $\partial \mathcal{M}$ is empty. If $\partial \mathcal{M}$ is nonempty, then two equilibria $u_-$ and $u_+$ of \eqref{cheq2} are called \emph{adjacent} if there is no other equilibrium $u_*$ between $u_-$ and $u_+$ at the boundary point $s = s_*$ along the shooting curve $M^u_{s_*}$ defined in \eqref{ShootingCurve}, i.e., either $u_-(s_*)\prec u_*(s_*)\prec u_+(s_*)$ or $u_-(s_*)\succ u_*(s_*)\succ u_+(s_*)$, where $\prec$ is the order following the parametrization of $M^u_{s_*}$, such that $z(u_--u_*)=z(u_--u_+)=z(u_{+}-u_*)$. If $\partial \mathcal{M}$ is empty, we define adjacency as mentioned above, except for replacing $s_*$ by the midpoint $s_*/2$.}

\begin{theorem}\label{cor:att} \emph{\textbf{Global Attractor of Vortex Solutions}}.
For each $\lambda\in(\lambda_k,\lambda_{k+1})$ let $u_j^+$ and $u_{j}^-$ be the amplitude of the vortex equilibria of the equation \eqref{cheq2} from Theorem \ref{cor1} with $j = 0,1,...,k$. Then  
\begin{equation}\label{MorseindexTHM}
    i(u_j^+)=i(u_j^-)=j.
\end{equation}
Moreover, there exists a vortex transition wave solution $u(t,s)$ of the equation \eqref{cheq2} which is a heteroclinic orbit \textcolor{black}{that converges to distinct vortex equilibria} $u_j^\iota(s)$ and $u_{\ell}^{\iota'}(s)$ with $\iota,\iota'\in \{+,-\}$ as $t\to \pm\infty$, i.e., 
\begin{equation}\label{het}
u_j^\iota(s)\xleftarrow{t \rightarrow -\infty} u(t,s) \xrightarrow{t \rightarrow +\infty} u_{\ell}^{\iota'}(s) \quad \text{in    } H^{2 \gamma}(\mathcal{M}, \mathbb{C}) \cap L_m^2(\mathbb{R}),
\end{equation}
if and only if $u_j^\iota(s)$ and $u_{\ell}^{\iota'}(s)$ are adjacent and $j> \ell$.
In particular, the global attractor $\mathcal{A}_m$ is described in Figure \ref{FIGCOR}.
\end{theorem}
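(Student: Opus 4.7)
The plan is to combine three ingredients: the supercritical pitchfork bifurcation picture of Theorem~\ref{cor1}, the hyperbolicity of every $u_j^\pm$ along its global branch (the main output of the new shooting method announced in the abstract), and Wolfrum's adjacency criterion for scalar parabolic equations on an interval. First I would compute the Morse indices. Multiplying the linearization of (\ref{cheq2}) at any equilibrium $u_*$ by the weight $a(s)$ puts it into Sturm--Liouville form
$$\mathcal{L}_* v = \frac{1}{a}(a v_s)_s - \frac{m^2}{a^2} v + \lambda \bigl(1-3 u_*^2\bigr) v,$$
a self-adjoint operator on $L^2((0,s_*);\,a\,ds)$ whose eigenvalues are simple, decreasing, and whose $k$-th eigenfunction has exactly $k$ interior zeros. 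For $u_*\equiv 0$ and $\lambda\in(\lambda_k,\lambda_{k+1})$ one reads off $i(0)=k+1$. At each bifurcation value $\lambda=\lambda_j$ the $j$-th eigenvalue of $\mathcal{L}_0$ crosses zero, and the principle of exchange of stability for supercritical pitchforks gives $i(u_j^\pm)=j$ locally near $\lambda_j$. Hyperbolicity of $u_j^\pm$ along the entire global branch then prevents any eigenvalue of $\mathcal{L}_{u_j^\pm}$ from crossing zero as $\lambda$ grows, so (\ref{MorseindexTHM}) extends to all $\lambda\in(\lambda_j,\infty)$.

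Next I would deduce the heteroclinic part. Equation (\ref{cheq2}) is a scalar semilinear parabolic PDE on the spatial interval $(0,s_*)$ with separated boundary conditions (Robin at $s_*$ in the disk case, pole regularity at both endpoints in the sphere case, handled by the reflectional symmetry (\ref{refsym})). Consequently the Sturm--Matano--Angenent nodal property applies: the zero number $z(u(t,\cdot)-\tilde u(t,\cdot))$ is finite and nonincreasing in $t$ for any two distinct solutions, and drops strictly at degenerate zeros. Given hyperbolicity of every equilibrium together with the Morse indices above, Wolfrum's criterion \cite{Wolfrum02} applies: a heteroclinic orbit from $u_-$ to $u_+$ exists if and only if $i(u_-)>i(u_+)$ and the equilibria are adjacent in the sense recalled before the theorem. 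With $i(u_j^\iota)=j$ and $i(u_\ell^{\iota'})=\ell$, the Morse inequality reduces to $j>\ell$, yielding exactly (\ref{het}). Since the semiflow is bounded dissipative and gradient-like (a Zelenyak--Matano Lyapunov functional exists for scalar parabolic equations on an interval), the attractor $\mathcal{A}_m$ equals the union of all equilibria together with their heteroclinic connections, so the connection list above (augmented by the heteroclinics emanating from the trivial equilibrium, governed by the same adjacency bookkeeping) completely determines $\mathcal{A}_m$ as in Figure~\ref{FIGCOR}.

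The hard part is the global hyperbolicity of each $u_j^\pm$ for all $\lambda>\lambda_j$: without ruling out secondary bifurcations along every branch, the Morse index could jump, Wolfrum's theorem would not apply, and the attractor structure could collapse. This is precisely what the shooting method developed in the paper is designed to supply. A subsidiary technical issue is that the coefficient $m^2/a^2$ is singular at $s=0$ (and, in the sphere case, also at $s=s_*$), so some care is required to ensure that self-adjointness of $\mathcal{L}_*$, the Sturm oscillation statement, and the dropping-zero-number lemma all survive across the poles; the $C^{2,\nu}$ regularity of $\psi$ on $\mathcal{M}$ together with (\ref{positivity}) and (\ref{refsym}) is what permits this extension.
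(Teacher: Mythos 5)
Your proposal is correct in substance and overlaps heavily with the paper's argument: both rest on the gradient structure plus dissipativity (so that $\mathcal{A}_m$ consists of equilibria and their heteroclinics), on the Sturm nodal properties surviving the singular coefficient $m^2/a^2$ (the paper delegates this to \cite{LappicySing}), and on an adjacency/Morse-index connection criterion (the paper packages it as the cascading--blocking--liberalism trio of Lemmas \ref{cascading}--\ref{estabhets2} taken from \cite{FiedlerRocha96}, which is equivalent to the Wolfrum criterion \cite{Wolfrum02} you invoke). The genuine difference is in how \eqref{MorseindexTHM} is obtained. You derive $i(u_j^\pm)=j$ by exchange of stability at the supercritical pitchfork and then propagate the index along the entire global branch, using hyperbolicity (Lemma \ref{main-thm}) to forbid eigenvalue crossings; the paper instead reads the Morse index off the winding of the shooting curve $M^u_{\hat s}$ around the line of trivial equilibria, via Lemma 2.4 of \cite{LappicySing}. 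Both routes are legitimate; yours is more classical, while the paper's has the advantage that the same geometric object simultaneously delivers the zero numbers. That last point is the one item your sketch leaves open: to pass from the abstract ``adjacent and $j>\ell$'' criterion to the explicit attractor of Figure \ref{FIGCOR}, one must actually decide which pairs of equilibria are adjacent, i.e.\ compute $z(u_j^{\iota}-u_\ell^{\iota'})$ for all pairs and rule out blocking equilibria in between; the paper does this by showing that the shooting curves coincide combinatorially with those of the Chafee--Infante problem, so the connection graph is the Chafee--Infante one. Your argument proves the ``if and only if'' statement as literally phrased, but needs this extra zero-number bookkeeping before the figure itself is justified.
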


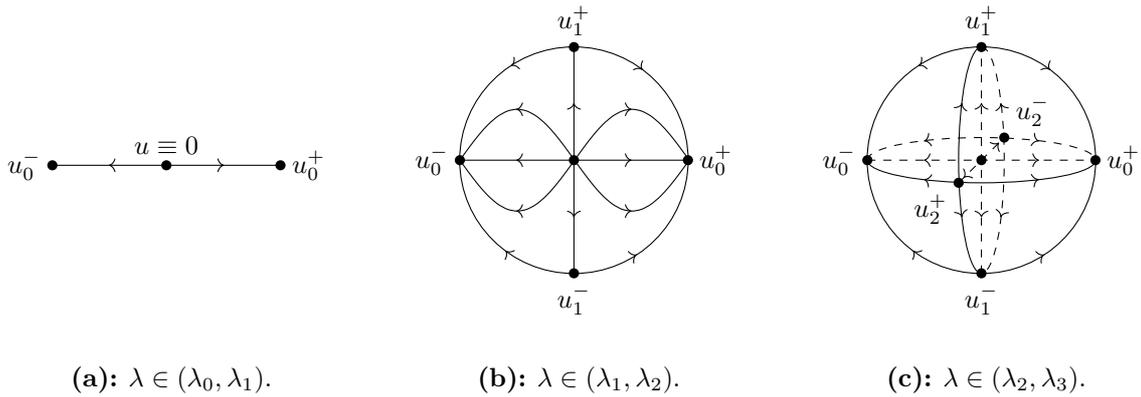
\begin{figure}[H]
\minipage[b]{0.33\textwidth}\centering
\begin{subfigure}\centering
    \begin{tikzpicture}[scale=0.75]
    \draw[-] (-2,0) -- (2,0);
    \draw[->] (-1,0) -- (-1.01,0);
    \draw[->] (1,0) -- (1.01,0);
    
    \filldraw [thick] (-2,0) circle (2pt) node[left] {\footnotesize{$u_0^-$}};
    \filldraw [thick] (2,0) circle (2pt) node[right] {\footnotesize{$u_0^+$}};
    \filldraw [thick] (0,0) circle (2pt) node[above] {\footnotesize{$u\equiv 0$}};

    \filldraw [white,rotate=90,thick] (-2,0) circle (2pt) node[below] {\footnotesize{$-\Phi^\infty_1$}};
    \filldraw [white,rotate=90,thick] (2,0) circle (2pt) node[above] {\footnotesize{$+\Phi^\infty_1$}};   
    \end{tikzpicture}
    \addtocounter{subfigure}{-1}\captionof{subfigure}{\footnotesize{$\lambda\in (\lambda_0,\lambda_1)$.}}\label{FIG:dim1}
\end{subfigure}
\endminipage\hfill
\minipage[b]{0.33\textwidth}\centering

\begin{subfigure}\centering
    \begin{tikzpicture}[scale=0.75]
    \draw[-] (-2,0) -- (2,0);
    \draw[->] (-1,0) -- (-1.01,0);
    \draw[->] (1,0) -- (1.01,0);
    
    \draw[rotate=90, -] (-2,0) -- (2,0);
    \draw[rotate=90, ->] (-1,0) -- (-1.01,0);
    \draw[rotate=90, ->] (1,0) -- (1.01,0);
    
    \filldraw [rotate=90,thick] (-2,0) circle (2pt) node[below] {\footnotesize{$u_1^-$}};
    \filldraw [rotate=90,thick] (2,0) circle (2pt) node[above] {\footnotesize{$u_1^+$}};    
    
    \draw (0,0) circle (57pt);

    \draw[<-] (1.2,1.6) arc (44:45:0.4cm and 0.4cm);    
    \draw[rotate=180,<-] (1.2,1.6) arc (44:45:0.4cm and 0.4cm);    

    \draw[rotate=70,->] (1.2,1.6) arc (44:45:0.4cm and 0.4cm);    
    \draw[rotate=70+180,->] (1.2,1.6) arc (44:45:0.4cm and 0.4cm);    

    \draw [domain=0:6.28,variable=\t,smooth] plot ({-2+0.64*\t},{0.9*(sin(\t r))});
    \draw[->] (-1,0.9) -- (-1.01,0.9);
    \draw[->] (1,-0.9) -- (1.01,-0.9);
    
    \draw [domain=0:6.28,variable=\t,smooth] plot ({-2+0.64*\t},{-0.9*(sin(\t r))});
    \draw[->] (-1,-0.9) -- (-1.01,-0.9);
    \draw[->] (1,0.9) -- (1.01,0.9);
    
    \filldraw [thick] (-2,0) circle (2pt) node[left] {\footnotesize{$u_0^-$}};
    \filldraw [thick] (2,0) circle (2pt) node[right] {\footnotesize{$u_0^+$}};
    \filldraw [thick] (0,0) circle (2pt);
    \end{tikzpicture}
    \addtocounter{subfigure}{-1}\captionof{subfigure}{\footnotesize{$\lambda\in (\lambda_1,\lambda_2)$.}}\label{FIG:dim2}
\end{subfigure}
\endminipage\hfill
\minipage[b]{0.33\textwidth}\centering
\begin{subfigure}\centering
    \begin{tikzpicture}[scale=0.75]
    \draw[dashed,-] (-2,0) -- (2,0);
    \draw[dashed,->] (-1,0) -- (-1.01,0);
    \draw[dashed,->] (1,0) -- (1.01,0);
    
    \draw[rotate=90, dashed,-] (-2,0) -- (2,0);
    \draw[rotate=90, dashed,->] (-1,0) -- (-1.01,0);
    \draw[rotate=90, dashed,->] (1,0) -- (1.01,0);

    \draw[dashed,-] (-0.4,-0.4) -- (0.4,0.4);
    \draw[->] (0.3,0.3) -- (0.31,0.31);
    \draw[rotate=180,->] (0.3,0.3) -- (0.31,0.31);
    
    \filldraw [rotate=90,thick] (-2,0) circle (2pt) node[below] {\footnotesize{$u_1^-$}};
    \filldraw [rotate=90,thick] (2,0) circle (2pt) node[above] {\footnotesize{$u_1^+$}};     \filldraw [thick] (-2,0) circle (2pt) node[left] {\footnotesize{$u_0^-$}};
    \filldraw [thick] (2,0) circle (2pt) node[right] {\footnotesize{$u_0^+$}};
    \filldraw [thick] (0,0) circle (2pt);
    \filldraw [thick] (-0.4,-0.4) circle (2pt) node[anchor=north east] {\footnotesize{$u_2^+$}};
    \filldraw [thick] (0.4,0.4) circle (2pt) node[anchor=south west] {\footnotesize{$u_2^-$}};        
    
    \draw (0,0) circle (57pt);

    \draw[<-] (1.2,1.6) arc (44:45:0.4cm and 0.4cm);    
    \draw[rotate=180,<-] (1.2,1.6) arc (44:45:0.4cm and 0.4cm);    

    \draw[rotate=70,->] (1.2,1.6) arc (44:45:0.4cm and 0.4cm);    
    \draw[rotate=70+180,->] (1.2,1.6) arc (44:45:0.4cm and 0.4cm);       

    \draw (-2,0) arc (180:360:2cm and 0.4cm);
    \draw[dashed] (-2,0) arc (180:0:2cm and 0.4cm);

    \draw[rotate=-90] (-2,0) arc (180:360:2cm and 0.4cm);
    \draw[rotate=-90,dashed] (-2,0) arc (180:0:2cm and 0.4cm);
    
    \draw[->] (0.34,1) arc (44:45:0.09cm and 0.4cm);    
    \draw[rotate=90,->] (0.34,1) arc (44:45:0.09cm and 0.4cm);    
    \draw[rotate=180,->] (0.34,1) arc (44:45:0.09cm and 0.4cm);    
    \draw[rotate=270,->] (0.34,1) arc (44:45:0.09cm and 0.4cm);    

    \draw[->] (-0.34,1) arc (131:130:0.09cm and 0.4cm);    
    \draw[rotate=90,->] (-0.34,1) arc (131:130:0.09cm and 0.4cm);    
    \draw[rotate=180,->] (-0.34,1) arc (131:130:0.09cm and 0.4cm);    
    \draw[rotate=270,->] (-0.34,1) arc (131:130:0.09cm and 0.4cm);    
    \end{tikzpicture}
    \addtocounter{subfigure}{-1}\captionof{subfigure}{\footnotesize{$\lambda\in (\lambda_2,\lambda_3)$.}} 
\end{subfigure}
\endminipage
\captionof{figure}{The Ginzburg--Landau global attractor $\mathcal{A}_m$ of $m$-armed vortex solutions, where dots correspond to vortex equilibria and arrows to vortex transition waves as heteroclinic orbits. Note $\mathcal{A}_m$ coincides with the well-known Chafee--Infante attractor; see \cite{Hale99, He81}.
}\label{FIGCOR}
\end{figure}

Note that the vortices of transition waves are always pinned, in the sense that the solution amplitude $u$ satisfies $u(t,0)=0$, and also $u(t, s_*) = 0$ if $\partial \mathcal{M}$ is empty, for all $t \ge 0$. Indeed, since the attractor is contained in the space $L_m^2(\mathbb{R})$ defined by \eqref{L^2_m}, continuity of $u$ at $s=0$ implies $u(t,0)=0$ for all $t\in\mathbb{R}$. \textcolor{black}{Also, recall that every solution of \eqref{intro:gl} converges to some equilibrium as $t \rightarrow \infty$, due to the gradient structure \eqref{complex_energy2}; see also \cite[Theorem 2.1]{LinQiang97} for coupling with electromagnetic fields. By Theorem \ref{cor:att} we can specify \emph{which} vortex equilibrium is the forward time limit of certain solutions of \eqref{cheq2}. 
}

\textcolor{black}{As regards to stability information of vortex equilibria, Theorem \ref{cor:att} gives the Morse index of each vortex equilibrium constrained to $L_m^2(\mathbb{R})$, which also yields a lower bound of the Morse index in the full space $L^2(\mathcal{M}, \mathbb{C})$. We remark that estimates on the Morse index have attracted recent attention; see \cite{DaRong}. Moreover, by \eqref{MorseindexTHM} every $m$-armed vortex equilibrium on the principal bifurcation branch is $L_m^2$-stable, and it is natural to conjecture its full $L^2$-stability. Heuristically, $1$-armed vortex equilibria are believed to be $L^2$-stable since their local shape is robust under general $L^2$-perturbations; see \cite{Ha82}. This $L^2$-stability of $1$-armed vortex equilibria on disks has been proved by the variational method; see \cite{Lin97, Miro}. An alternative to the variational method for stability analysis is based on the decomposition $L^2 = \bigoplus_{\ell = 0}^\infty L_\ell^2$, and the $L^2$-stability information can be extracted by analyzing mode interactions induced by perturbations from all other spaces $L^2_\ell$ with $\ell \neq m$; see \cite{ChLa00,GoSt03}. The study on mode interactions answers whether vortex equilibria undergo secondary bifurcations, and also suggests how the global attractors $\mathcal{A}_{m_1}$ and $\mathcal{A}_{m_2}$ of vortex solutions change and influence each other.
}

Our main tool in the proof of all theorems is the usage of a different \emph{shooting method} in comparison with the shooting argument extensively used in the literature to study Ginzburg--Landau vortex solutions; see \cite{Gr80, Ha82, KoHo81, Ts10}. Indeed, such a different shooting method has been widely used in constructing global attractors for scalar reaction-diffusion equations on bounded intervals; see \cite{FiedlerRocha96,LappicySing}. 

We illustrate the main feature and limitations of the shooting argument used in the literature. Indeed, this shooting argument tracks the first critical point of the solution amplitude. With such information the amplitude of vortex solutions is monotone and its derivative is zero either at the infinity of $\mathbb{R}^2$ or at the boundary of disks. This feature carries two drawbacks. First, this shooting argument is not able to treat Robin boundary conditions. Second, vortex solutions with sign-changing amplitude are missing in the literature. These bottlenecks were overcome by establishing a global bifurcation approach in \cite{Da20}, but however, not all bifurcation branches of vortex equilibria are proved to be global. 

In order to globally extend all bifurcation branches, we recall that the stationary Ginzburg--Landau equation restricted to the space of vortex solutions $L_m^2(\mathbb{R})$ can be viewed as a Sturm--Liouville type ODE given by \eqref{cheq1} with unbounded coefficients at vortices. The usage of a different shooting method treats the dynamics near vortices in a more sophisticated manner. In particular, our shooting method provides four advantages:
\begin{itemize}
\item[(1)] It generalizes the shooting argument  used in the literature, which occurs as a special case for seeking solutions with monotone amplitude. 
\item[(2)] It justifies the shooting parameter in the shooting argument, which is the leading coefficient of a formal asymptotic expansion of solutions near vortices. Such a formal expansion is justified via the invariant manifold theory; see Subsection \ref{subsec:shooting}.
\item[(3)] 
It provides not only the existence of vortex equilibria, but also \textit{hyperbolicity}, that is, the associated linearization in the invariant subspace $L_{m}^2(\mathbb{R})$ possesses only nonzero eigenvalues. Furthermore, it describes the unstable dimension of each vortex equilibrium as in \eqref{MorseindexTHM}. For a review on stability analysis of vortex solutions see \cite{SaSc20}.
\item[(4)] Most importantly, it characterizes the global attractor of vortex solutions that contains vortex transition waves.
\end{itemize}

The remaining of this paper is organized as follows. In Section \ref{sec:proof} we prove the three main theorems by assuming that hyperbolicity of vortex equilibria holds true. We devote three subsections in Section \ref{sec:hyperbolicity} to proving hyperbolicity. In Subsection \ref{subsec:shooting} we define the shooting curves and present the general scheme of proof. In Subsection \ref{subsec:monot}, we construct a piece of the shooting curve by showing that its angle and radius are monotone. As a conclusion, in Subsection \ref{subsec:subpf} we use the symmetries of the ODE \eqref{cheq1} to construct the full shooting curves and prove hyperbolicity. 

\section{Proof of Theorems}\label{sec:proof}

In this section, we prove the main results: Theorems \ref{cor1}, Theorem \ref{cor3}, and Theorem \ref{cor:att}. Our proof essentially relies on the following crucial lemma proved in Section \ref{sec:hyperbolicity}.

\begin{lemma}\emph{\textbf{Hyperbolicity.}} \label{main-thm}
All nontrivial $m$-armed vortex equilibria of the Ginzburg--Landau equation \eqref{intro:gl} are hyperbolic in $L_{m}^2(\mathbb{R})$, that is, its associated linearization in $L_{m}^2(\mathbb{R})$ possesses only nonzero eigenvalues.
\end{lemma}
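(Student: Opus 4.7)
The plan is to translate hyperbolicity into a transversality statement for a one-parameter shooting map and to prove that transversality by establishing strict monotonicity of an associated shooting curve. After restriction to $L_m^2(\mathbb{R})$ the linearization of \eqref{intro:gl} at a vortex equilibrium $\psi_* = u_*(s)\, e^{im\varphi}$ becomes the real scalar Sturm-Liouville operator
\[
L_* v = v'' + \frac{a'}{a}\, v' - \frac{m^2}{a^2}\, v + \lambda\,(1 - 3 u_*^2)\, v,
\]
which is singular at $s=0$ (and at $s=s_*$ when $\partial\mathcal{M}=\emptyset$). Since $L_*$ is self-adjoint with compact resolvent, its spectrum is real and discrete, so hyperbolicity amounts to showing that $L_* v = 0$ with the regularity condition at $s=0$ and the Robin condition \eqref{Robin-u} at $s=s_*$ (or its boundary-free analogue) admits only the trivial solution.

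The next step is to set up the shooting. The indicial equation of \eqref{cheq1} at $s=0$ has characteristic exponents $\pm m$, so the invariant manifold theory announced in Subsection \ref{subsec:shooting} yields a smooth one-parameter family of regular solutions $u(s;d) = d\, s^m + O(s^{m+2})$ labelled by a shooting parameter $d\in\mathbb{R}$. In the case $\partial \mathcal{M}\neq\emptyset$ I set $B(d) := \alpha_1\, u(s_*;d) + \alpha_2\, u'(s_*;d)$, and when $\partial \mathcal{M}=\emptyset$ I use the reflectional symmetry \eqref{refsym} to restrict to $[0,s_*/2]$ and impose either $u'(s_*/2;d)=0$ (even branch) or $u(s_*/2;d)=0$ (odd branch). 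A vortex equilibrium then corresponds to a zero $d_*$ of $B$, and differentiating \eqref{cheq1} in $d$ identifies $B'(d_*)$ with the boundary trace of the variation $w := \partial_d u|_{d=d_*}$, which itself satisfies $L_* w = 0$ with the admissible regular behavior at $s=0$. Consequently hyperbolicity at $u_*$ is exactly the transversality condition $B'(d_*)\neq 0$, i.e., the shooting curve $\Gamma(d)=(u(s_*;d),u'(s_*;d))$ crosses the boundary-condition line transversally at $d=d_*$.

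To prove this transversality I would establish strict monotonicity of the polar angle $\theta(d)$ of $\Gamma(d)$ in the $(u,u')$-plane, with a winding number fixed by the oscillation count of $u_*$. The natural tool is a Prüfer-type Sturm comparison: as $d$ increases, the zeros of $u(\cdot;d)$ on $(0,s_*)$ move monotonically, and between the critical values of $d$ at which new zeros enter the interval the angle $\theta(d)$ at the endpoint is strictly monotone. The $\mathbb{Z}_2$-symmetry $d\mapsto -d$ of \eqref{cheq1} lets me restrict to $d\geq 0$ and recover the full curve by reflection. Strict monotonicity of $\theta(d)$ forces every intersection of $\Gamma$ with the boundary line to be simple, so $B'(d_*)\neq 0$ at each equilibrium, establishing Lemma \ref{main-thm}. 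As a byproduct, the winding count between consecutive eigenvalues $\lambda_k,\lambda_{k+1}$ delivers the Morse-index identity \eqref{MorseindexTHM} needed for Theorem \ref{cor:att}.

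The hardest step, I expect, is the rigorous treatment of the singular endpoints. The coefficient $m^2/a^2$ blows up at $s=0$ (and at $s=s_*$ in the boundary-free case), so neither existence nor smooth $d$-dependence of the regular branch $u(\cdot;d)$ is automatic; the formal Frobenius expansion must be upgraded, via the invariant manifold construction, to a genuine $C^1$-family in $d$ for which $\partial_d u$ is a legitimate test function for the spectral problem of $L_*$. Once this analytical setup is in place, the Sturm-oscillation and monotonicity bookkeeping is comparatively standard, and symmetry arguments in Subsection \ref{subsec:subpf} stitch the pieces of the shooting curve together to cover all bifurcation branches uniformly.
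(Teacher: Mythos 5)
Your proposal follows essentially the same route as the paper: hyperbolicity is recast as transversality of a shooting curve (with the shooting parameter at the singular vortex justified via invariant-manifold theory rather than a formal Frobenius expansion) against the boundary-condition line, transversality is deduced from strict monotonicity of the polar angle of that curve in the shooting parameter, and the reflection/$\mathbb{Z}_2$ symmetries handle the boundaryless case. The one caveat is that this monotonicity is not a classical Sturm/Pr\"ufer comparison for the nonlinear equation — since the effective potential depends on the unknown solution itself, the paper must prove angle and radius monotonicity simultaneously by a coupled Gr\"onwall contradiction argument (Lemma \ref{lem:monot}), restricted to shooting parameters below that of the principal branch — but you have correctly identified this as the key lemma and the singular endpoints as the main analytic difficulty.
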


\subsection{Global bifurcation of vortex equilibria}

In this subsection we prove Theorem \ref{cor1}. For the sake of completeness, we briefly recall the framework of bifurcation analysis adopted in \cite{Da20}. Indeed, we seek vortex equilibria of the Ginzburg--Landau equation that bifurcate from the trivial solution $\psi \equiv 0$ of the elliptic equation (\ref{intro:ell}) restricted to $L_m^2(\mathbb{R})$. Hence we study the linearized operator around the trivial equilibrium, 
\begin{equation}
\mathcal{L}_{(\lambda, 0)}[U] := \Delta_{\mathcal{M}} U + \lambda \, U,  
\end{equation}
where $U \in H_m^2(\mathbb{R}) := H^2(\mathcal{M}, \mathbb{C}) \cap L_m^2(\mathbb{R})$. 

Observe that $-\Delta_\mathcal{M}$ restricted to $L_m^2(\mathbb{R})$ is a singular Sturm-Liouville operator because $a(0) = 0$ (and also $a(s_*) = 0$ if $\partial \mathcal{M}$ is empty). However, it is singular merely due to polar coordinates of $\mathcal{M}$. Thus it was proved in \cite[Lemma 4.2]{Da20} that the spectrum of $-\Delta_{\mathcal{M}}$ restricted to $L_m^2(\mathbb{R})$ consists of simple eigenvalues, which can be listed as
\begin{equation} \label{eig-list}
0 < \lambda_0 < \lambda_1 < ... < \lambda_k < ..., \quad \lim_{k \rightarrow \infty} \lambda_{k} =  \infty.
\end{equation}
Therefore, based on the well-known local bifurcation from simple eigenvalues, nontrivial solutions of (\ref{intro:ell}) near each bifurcation point $(\lambda_k, 0) \in \mathbb{R}_+ \times H_m^2(\mathbb{R})$, indexed by $k \in \mathbb{N}_0$, form a unique local bifurcation curve $\mathcal{C}_k$ in $\mathbb{R}_+ \times H_m^2(\mathbb{R})$; see \cite[Lemma 3.5]{Da20}. Furthermore, the cubic nonlinearity in (\ref{intro:ell}) determines the local shape of $\mathcal{C}_k$ as a supercritical pitchfork. 
By the $\mathbb{Z}_2$-symmetry, that is, $(\lambda,\psi)$ is a solution of (\ref{intro:ell}) if and only if $(\lambda,- \psi)$ is a solution, and thus the curve $\mathcal{C}_k$ is reflectional symmetric with respect to the $\lambda$-axis; see Figure \ref{FIG;open-closed}. Therefore, we decompose $\mathcal{C}_k = \mathcal{C}_{k,+} \cup \mathcal{C}_{k,-} \cup \left\{(\lambda_k, 0)\right\}$, where $(\lambda, \psi) \in \mathcal{C}_{k,+}$ (resp., $(\lambda, \psi) \in \mathcal{C}_{k,-}$) if the amplitude $u(s)$ of $\psi$ is positive (resp., negative) near the vortex $s = 0$, and thereby both $\mathcal{C}_{k,\iota}$ for $\iota \in \{ +, -\}$ can be locally parametrized by $\lambda$ in a monotone way; see \cite[Lemma 3.6]{Da20}. 

The main task is to globally extend such a monotone parametrization to the right infinity, and moreover, each $\mathcal{C}_k$ undergoes no secondary bifurcations in $\mathbb{R}_+ \times H_{m}^2(\mathbb{R})$. The idea of proof is based on an \textit{analytic induction}, which follows from an open-closed argument: Show that the set of bifurcation parameter values that parametrize the curve $\mathcal{C}_{k,\iota}$ is both open and closed in the parameter space $(\lambda_k, \infty)$, and consequently this set is equal to $(\lambda_k, \infty)$; see Figure \ref{FIG;open-closed}. 

\begin{figure}[!htb] 
\minipage{0.5\textwidth}\centering
\begin{tikzpicture}[scale=0.8]
        \draw[->] (-0.1,0) -- (7.1,0) node[right] {$\lambda\in\mathbb{R}$};
        \draw[->] (0,-2) -- (0,2) node[right] {$L^2_m(\mathbb{R})$};
        
        \filldraw [black] (0,0) circle (0.01pt) node[anchor=east]{$0$};        
        \draw[domain=-1.2:-1,dashed,samples=100] plot ({1+\x^2*tan((\x)r)},\x);
        \draw[domain=-1:1,samples=100] plot ({1+\x^2*tan((\x)r)},\x);
        \draw[domain=1.2:1,dashed,samples=100] plot ({1+\x^2*tan((\x)r)},\x);
        \filldraw [black] (1,0) circle (1pt) node[anchor=north west]{$\lambda_k$};
    
        \draw (3,1) node[above]{$\mathcal{C}_k$};  

        \draw [fill, thick] (4.603,1.1) rectangle (4.803,1.3);
        \draw [fill, thick] (4.603,-1.1) rectangle (4.803,-1.3);

\end{tikzpicture}
\endminipage \hfill 
\minipage{0.5\textwidth}\centering
\begin{tikzpicture}[scale=0.8]
        \draw[->] (-0.1,0) -- (7.1,0) node[right] {$\lambda\in\mathbb{R}$};
        \draw[->] (0,-2) -- (0,2) node[right] {$L^2_m(\mathbb{R})$};
        
        \filldraw [black] (0,0) circle (0.01pt) node[anchor=east]{$0$};        
        \draw[domain=-1.298:1.298,samples=100] plot ({1+\x^2*tan((\x)r)},\x);
        \filldraw [black] (1,0) circle (1pt) node[anchor=north west]{$\lambda_k$};
        \draw (3,1) node[above]{$\mathcal{C}_k$};  
\end{tikzpicture}
\endminipage
\caption{On the left, the local bifurcation curve $\mathcal{C}_k$ extends along the dashed line by the implicit function theorem; this is openness. Then it extends to the endpoint of the dashed line by a compactness argument; this is closedness. On the right, as a result, $\mathcal{C}_k$ is global as it extends to the right infinity.}
\label{FIG;open-closed}
\end{figure}
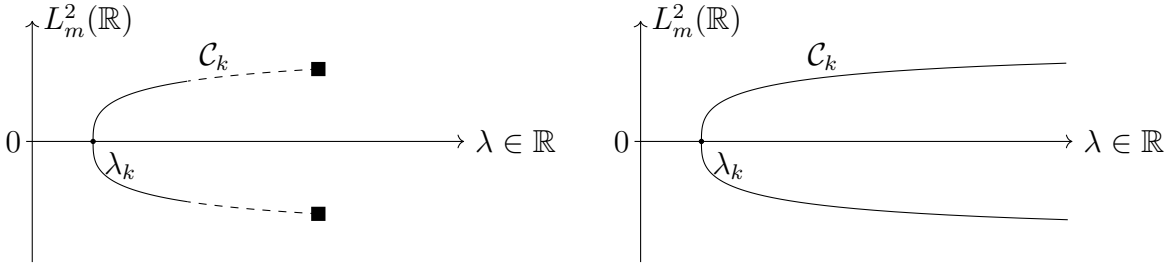  

\begin{lemma}\emph{\textbf{Openness.}} \label{openness}
Suppose that the bifurcation curve $\mathcal{C}_{k,\iota}$ admits a monotone parametrization in $\lambda \in (\lambda_k, \lambda_k+ \delta]$ for some $\delta > 0$. Then there exists a $\tilde{\delta} > \delta$ such that the monotone parametrization extends to $\lambda \in (\lambda_k, \lambda_k+ \tilde{\delta})$.
\end{lemma}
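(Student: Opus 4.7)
The plan is to apply the implicit function theorem at the right endpoint $\lambda^\sharp := \lambda_k+\delta$, with the invertibility hypothesis supplied by the Hyperbolicity Lemma \ref{main-thm}. Write the elliptic equation \eqref{intro:ell} as a zero-finding problem $\mathcal{F}(\lambda,\psi)=0$ for the smooth map
\[
\mathcal{F}:\mathbb{R}_+\times H_m^2(\mathbb{R}) \longrightarrow L_m^2(\mathbb{R}), \qquad \mathcal{F}(\lambda,\psi):=\Delta_{\mathcal{M}}\psi+\lambda(1-|\psi|^2)\psi,
\]
and denote by $\psi^\sharp := \psi(\lambda^\sharp)\in\mathcal{C}_{k,\iota}$ the endpoint of the existing monotone parametrization.

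The key step is to show that the partial Fr\'echet derivative
\[
A := D_\psi\mathcal{F}(\lambda^\sharp,\psi^\sharp):H_m^2(\mathbb{R})\longrightarrow L_m^2(\mathbb{R})
\]
is a topological isomorphism. A direct computation gives
\[
A[V] = \Delta_\mathcal{M} V + \lambda^\sharp(1-|\psi^\sharp|^2)V - \lambda^\sharp\bigl(\overline{\psi^\sharp}\,V+\psi^\sharp\,\overline{V}\bigr)\psi^\sharp,
\]
and the ansatz $\psi^\sharp = u^\sharp(s)e^{im\varphi}$ with $u^\sharp$ real shows that $A$ preserves $L_m^2(\mathbb{R})$ and acts there as the real Schr\"odinger-type operator $\Delta_\mathcal{M}+\lambda^\sharp(1-3(u^\sharp)^2)$ on the amplitudes. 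In the functional setting of \cite{Da20}, this is a relatively compact, bounded perturbation of the self-adjoint elliptic $\Delta_\mathcal{M}$ (equipped with the Robin condition \eqref{RobinBC} when $\partial\mathcal{M}\neq\emptyset$), so $A$ is self-adjoint and Fredholm of index zero. Lemma \ref{main-thm} states that $\psi^\sharp$ is hyperbolic in $L_m^2(\mathbb{R})$, hence $0$ is not an eigenvalue of $A$; the Fredholm alternative then promotes $A$ to a bijection.

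The smooth implicit function theorem now produces a unique $C^1$ branch $\lambda\mapsto \tilde\psi(\lambda)$ of solutions to $\mathcal{F}=0$ through $(\lambda^\sharp,\psi^\sharp)$, defined on some open interval $(\lambda^\sharp-\eta,\lambda^\sharp+\eta)$. The local uniqueness of the IFT forces $\tilde\psi(\lambda)=\psi(\lambda)$ on $(\lambda^\sharp-\eta,\lambda^\sharp]$, so setting $\tilde\delta:=\delta+\eta$ yields a continuous extension of the parametrization to $(\lambda_k,\lambda_k+\tilde\delta)$. Monotonicity is automatic because the extension is realized as a graph over the parameter $\lambda$, and the extension remains on the signed branch $\iota$ because the amplitude cannot cross the trivial branch $u\equiv 0$ on the extension interval without producing a secondary bifurcation point that would violate hyperbolicity at a nearby parameter.

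The main obstacle I foresee is the functional-analytic bookkeeping at the vortex points where $a(s)$ vanishes: the coefficient $m^2/a^2$ in the amplitude equation \eqref{cheq1} is singular there, so one must work in the weighted Hilbert space with measure $a(s)\,ds$ and confirm that $A$ is genuinely Fredholm of index zero, not merely formally so. Since this is exactly the setting in which \cite{Da20} established the Fredholm theory for $\Delta_\mathcal{M}$ restricted to $L_m^2(\mathbb{R})$, the same arguments apply to the bounded-coefficient perturbation $A$, and once the Fredholm framework is secured the openness follows directly from the implicit function theorem.
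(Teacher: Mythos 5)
Your proposal is correct and follows essentially the same route as the paper: identify the linearization $\Delta_\mathcal{M}+\lambda(1-3|\psi|^2)$ restricted to $L_m^2(\mathbb{R})$, note it is self-adjoint and Fredholm of index zero in the functional setting of \cite{Da20}, conclude from the Hyperbolicity Lemma \ref{main-thm} that its kernel is trivial and hence that it is an isomorphism, and apply the implicit function theorem to extend the parametrization past $\lambda_k+\delta$. Your added remarks on preserving monotonicity and the sign of the branch $\iota$ are sensible bookkeeping that the paper leaves implicit (deferring to the local parametrization of \cite{Da20}, Lemma 3.6).
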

\begin{proof}[\textbf{Proof}]
We extend the monotone parametrization by the implicit function theorem. Thus we show that the linearized operator around any given vortex equilibrium $(\lambda,\psi) \in \mathcal{C}_k$,
\begin{equation} \label{linear-at-solution}
\mathcal{L}_{(\lambda,\psi)}[U]
:= 
\Delta_{\mathcal{M}}U
+ \lambda\,( 1- 3 \, |\psi|^2) \, U,
\end{equation}
is a linear homeomorphism. Since $\mathcal{L}_{(\lambda|\,\psi)}$ is self-adjoint on $L_m^2(\mathbb{R})$, and moreover, Fredholm of index zero (see \cite[Theorem 2.4.1]{Agetal97}), it suffices to show that the kernel of $\mathcal{L}_{(\lambda|\,\psi)}$ is trivial, which follows from hyperbolicity of vortex equilibria in Lemma \ref{main-thm}.
\end{proof}

\begin{remark}
In \cite{Da20} one of the authors proved openness for the principal bifurcation curve $\mathcal{C}_{0}$. The reason is that the amplitude $u(s)$ of solutions on $\mathcal{C}_0$ does not change sign (see \cite[Lemma 3.3]{Da20}), and thus comparison of principal eigenvalues yields openness. Such a comparison is fruitless for other bifurcation curves, since in this case the amplitude must change sign.
\end{remark}

The following $C^0$-bound for solutions, which is essentially a consequence of the maximum principle, is crucial for the global character of $\mathcal{C}_k$:
\begin{equation} \label{priori}
|\psi|_{C^0} = \sup_{s \in [0,s_*]} |u(s)|  \le 1;
\end{equation}
see \cite[Lemma 3.7]{Da20}. With the $C^0$-bound (\ref{priori}) and the fact that $\Delta_{\mathcal{M}}$ has compact resolvent, it follows from a compactness argument that the monotone parametrization extends to the endpoint and thus yields closedness of all bifurcation curves.

\begin{lemma}\emph{\textbf{Closedness.} [see \cite[Lemma 3.10]{Da20}]} \label{closedness}
Suppose that $\mathcal{C}_{k,\iota}$ admits a monotone parametrization in $\lambda \in (\lambda_k, \lambda_k+ \tilde{\delta})$ for some $\tilde{\delta} > 0$. Then the monotone parametrization extends to $\lambda = \lambda_k+ \tilde{\delta}$. 
\end{lemma}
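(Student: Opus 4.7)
The plan is a standard compactness-bootstrap argument driven by the uniform $C^0$-bound \eqref{priori} and the compact resolvent of $\Delta_{\mathcal{M}}$ restricted to $L_m^2(\mathbb{R})$. First I would pick any sequence $\lambda_n \in (\lambda_k,\lambda_k+\tilde{\delta})$ with $\lambda_n \to \lambda_k+\tilde{\delta}$, and let $\psi_n = u_n(s)\,e^{im\varphi} \in \mathcal{C}_{k,\iota}$ denote the corresponding vortex equilibria supplied by the hypothesised monotone parametrization. The bound $|u_n|_{C^0}\le 1$ makes the right-hand side $\lambda_n(1-|\psi_n|^2)\psi_n$ of \eqref{intro:ell} uniformly bounded in $L^\infty(\mathcal{M})$. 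Elliptic bootstrap applied to \eqref{intro:ell} on the smooth compact manifold $\mathcal{M}$, namely $W^{2,p}$-regularity for every $p<\infty$ via the compact resolvent of $\Delta_{\mathcal{M}}$, then Sobolev embedding into $C^{1,\nu}$, followed by a Schauder estimate, yields a uniform $C^{2,\nu}(\mathcal{M},\mathbb{C})$-bound on $\psi_n$.

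Second, by Arzelà-Ascoli I would extract a subsequence $\psi_{n_j}\to\psi_*$ in $C^2(\mathcal{M},\mathbb{C})$. Passing to the limit in \eqref{intro:ell} and, when $\partial\mathcal{M}$ is nonempty, in \eqref{RobinBC}, shows that $\psi_* = u_*(s)\,e^{im\varphi}$ is a vortex equilibrium at $\lambda = \lambda_k+\tilde{\delta}$ with the prescribed boundary condition. The sign of $u_n(s)$ near the vortex $s=0$ is preserved under $C^2$-convergence, so once $\psi_*$ is known to be nontrivial it automatically lies on the branch $\mathcal{C}_{k,\iota}$.

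The main obstacle will be ruling out the degenerate possibility $\psi_* \equiv 0$, which would correspond to the branch crashing back onto the trivial solution at a secondary bifurcation. My plan is to exploit the hypothesised monotone parametrization together with the supercritical pitchfork shape of $\mathcal{C}_k$ near $\lambda_k$: fixing any $\lambda^\sharp \in (\lambda_k,\lambda_k+\tilde{\delta})$ the corresponding solution $\psi(\lambda^\sharp)\in\mathcal{C}_{k,\iota}$ is nontrivial and, by monotonicity, provides a uniform lower bound $\|u_n\|_{L^2}\ge \|u(\lambda^\sharp)\|_{L^2} > 0$ for all $n$ with $\lambda_n > \lambda^\sharp$; this lower bound survives the $C^2$-limit and forces $\psi_*\not\equiv 0$. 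Finally, since the $C^{2,\nu}$-bound is uniform and the monotone parametrization together with local uniqueness from Lemma \ref{openness} makes the subsequential limit independent of the chosen subsequence, the entire map $\lambda\mapsto \psi(\lambda)$ extends continuously and monotonically to the endpoint $\lambda = \lambda_k+\tilde{\delta}$, as required.
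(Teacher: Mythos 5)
Your compactness--bootstrap argument is essentially the paper's own route: the lemma is quoted from \cite{Da20}, Lemma 3.10, and the surrounding text identifies exactly your two ingredients --- the a priori bound \eqref{priori} and the compact resolvent of $\Delta_{\mathcal{M}}$ --- as what drives the extraction of a convergent subsequence and the passage to the limit at the endpoint $\lambda=\lambda_k+\tilde{\delta}$. Your extra care in excluding the trivial limit via the monotone parametrization and in placing the limit on $\mathcal{C}_{k,\iota}$ fills in the same details as the cited argument, so no further comment is needed.
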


\subsection{Existence of spiral waves}

In this subsection we prove Theorem \ref{cor3}. To seek spiral waves solutions in the form
\begin{equation} \label{sec2:ans}
\Psi(t, s, \varphi) := e^{-\Omega t} \, u(s) \, e^{im \varphi},
\end{equation}
where $\Omega \in \mathbb{R}$ is the rotation frequency to be determined, we shall consider the complex Ginzburg--Landau equation
\begin{equation} \label{sec:cgl}
\Psi_t = (1+i\, \eta) \Delta_{\mathcal{M}} \Psi + \lambda \, (1- |\Psi|^2 - i \, \beta \,| \Psi|^2)\, \Psi,
\end{equation}
with prescribed parameters $\eta,\beta\in\mathbb{R}$ such that $(\eta, \beta) \neq (0,0)$. Indeed, when $(\eta, \beta) = (0,0)$, (\ref{sec:cgl}) possesses the strict Lyapunov function \eqref{complex_energy2}, which implies $\Omega = 0$, and up to the global gauge symmetry \eqref{gauge-sym} that the radial part $u(s)$ must be real valued; see \cite[Lemma 2.4]{Da20}. 

We follow the perturbation argument in \cite{Da20} and prove persistence of the global bifurcation diagram in Figure \ref{FIGbif} under small perturbations $0 < |\eta|,\,|\beta| \ll 1$. As a result of such persistence, the pattern of vortex equilibria is slightly twisted and thus exhibits a spiral shape, since the radial part becomes genuinely complex valued.

More precisely, we introduce the extended invariant subspace
\begin{equation} 
L_m^2(\mathbb{C}) := \{\psi \in L^2(\mathcal{M}, \mathbb{C}) : \psi(s, \varphi) = u(s) \, e^{im\varphi}, \, u(s) \in \mathbb{C}\}    
\end{equation}
and obtain the following elliptic equation by substituting (\ref{sec2:ans}) into the complex Ginzburg--Landau equation (\ref{sec:cgl}):
\begin{equation} \label{sec2:elliptic}
0 = (1 + i \, \eta) \, \Delta_{\mathcal{M}} \psi + i \, \lambda \, \Omega \, \psi + \lambda \, (1-|\psi|^2 - i\, \beta \, |\psi|^2)\, \psi.
\end{equation}
Then we consider the linearized operator around a given vortex equilibrium $(\lambda_*, \psi_*) \in \mathcal{C}_{k, \iota}$ when $(\Omega, \eta, \beta) = (0,0,0)$,  
\begin{equation} \label{gat-2}
\mathcal{L} [U]
:= 
\Delta_{\mathcal{M}}U
+ \lambda_*\, (1-|\psi_*|^2) \, U
- 2\, \lambda_* \, |\psi_*|^2 \, U_R \, e^{im\varphi}.
\end{equation}
Here $U \in H_{m}^2(\mathbb{C})$ is given by $U(s, \varphi) = (U_R(s) + i \, U_I(s)) \, e^{i m \varphi}$ and $U_R(s)$ and $U_I(s)$ are real-valued functions. Note that we identify $\mathbb{C}$ as a real vector space, and hence $
\mathcal{L}$ is a linear operator over $\mathbb{R}$. It follows that  $\mathcal{L}$ is self-adjoint on $L_m^2(\mathbb{C})$, and moreover, Fredholm of index zero (see \cite[Theorem 2.4.1]{Agetal97}). 

Notice that the standard implicit function theorem is not applicable for perturbation arguments here, since the global gauge symmetry \eqref{gauge-sym} already implies that the kernel of $\mathcal{L_*}$ contains $\mathrm{span}_\mathbb{R} \langle i \, \psi_* \rangle$. However, this situation is amendable to an equivariant version of implicit function theorem, as we restrict the elliptic equation (\ref{sec2:elliptic}) to the quotient space by the one-dimensional group orbit of the global gauge symmetry. To relieve the burden of notations, we rewrite the statement of \cite[Theorem 3.1]{RePe98} according to our setting.

\begin{prop} \label{prop-IFT} \emph{\textbf{Equivariant Implicit Function Theorem.}} Let $\mathcal{H}(\psi, \Omega, \eta, \beta)$ denote the nonlinear elliptic operator defined by the right-hand side of \eqref{sec2:elliptic}. Assume 
\begin{equation} \label{dim-cond-1}
    \mathrm{dim} \, \mathrm{ker}  \mathcal{L} = 1,
\end{equation}
and the real one-dimensional cokernel of $\mathcal{L}$ is spanned by $D_{(\Omega, \eta, \beta)} \mathcal{H}(\psi_*,0, 0, 0)$, that is, the following decomposition holds:
\begin{equation} \label{dim-cond-2}
    L_m^2(\mathbb{C}) = \mathcal{L} H_m^2(\mathbb{C}) \oplus D_{(\Omega, \eta, \beta)} \mathcal{H}(\psi_*,0, 0, 0) \mathbb{R}.
\end{equation}
Then there exist an open neighborhood $W$ of $(\eta, \beta)=(0,0)$ and unique smooth functions $\widetilde{\psi} : W \rightarrow H_m^2(\mathbb{C})$, $\widetilde{\Omega} : W \rightarrow \mathbb{R}$ such that $\widetilde{\psi}(0,0) = \psi_*$, $\widetilde{\Omega}(0,0) = 0$, and $\mathcal{H}(\widetilde{\psi}(\eta, \beta), \widetilde{\Omega}(\eta, \beta), \eta, \beta) = 0$ for all $(\eta, \beta) \in W$. 
\end{prop}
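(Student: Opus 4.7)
The plan is to deduce this proposition as a direct application of the equivariant implicit function theorem of \cite{RePe98} via a Lyapunov--Schmidt reduction adapted to the global gauge symmetry. The starting observation is that $\mathcal{H}$ inherits the $S^1$-equivariance of \eqref{sec:cgl}, namely $\mathcal{H}(e^{i\omega}\psi,\Omega,\eta,\beta) = e^{i\omega}\mathcal{H}(\psi,\Omega,\eta,\beta)$ for every $\omega\in S^1$. Differentiating at $\omega=0$ forces $i\psi_*\in\ker\mathcal{L}$, so the one-dimensional kernel in \eqref{dim-cond-1} is precisely the tangent direction to the gauge orbit through $\psi_*$. This kernel carries no genuine bifurcation information; it reflects only the redundancy of the group orbit, and is removed by restricting the equation to a slice transverse to the orbit.

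The first main step is the Lyapunov--Schmidt decomposition. Since $\mathcal{L}$ is self-adjoint and Fredholm of index zero on $L_m^2(\mathbb{C})$, I fix $L^2$-orthogonal decompositions
\begin{equation*}
H_m^2(\mathbb{C}) = \mathrm{span}_{\mathbb{R}}\langle i\psi_*\rangle \oplus X, \qquad L_m^2(\mathbb{C}) = \mathcal{L} H_m^2(\mathbb{C}) \oplus Y,
\end{equation*}
where $Y$ is the one-dimensional cokernel, and let $Q$ denote the projection onto $Y$. Parametrizing $\psi = \psi_* + v$ with $v\in X$ (a smooth slice to the gauge orbit at $\psi_*$), the equation $\mathcal{H}(\psi,\Omega,\eta,\beta)=0$ splits into the auxiliary equation $(\mathrm{Id}-Q)\mathcal{H}=0$ in $\mathcal{L} H_m^2(\mathbb{C})$ and the scalar bifurcation equation $Q\mathcal{H}=0$ in $Y$. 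Since $\mathcal{L}\vert_X : X \to \mathcal{L} H_m^2(\mathbb{C})$ is a linear isomorphism, the classical implicit function theorem applied to the auxiliary equation produces a unique smooth map $v = v(\Omega,\eta,\beta)$ near $(0,0,0)$ with $v(0,0,0)=0$.

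The second main step solves the scalar bifurcation equation $g(\Omega,\eta,\beta):=Q\mathcal{H}(\psi_*+v,\Omega,\eta,\beta)=0$. Its partial derivative in $\Omega$ at the origin is the $Y$-component of $D_\Omega\mathcal{H}(\psi_*,0,0,0) = i\lambda_*\psi_*$, which is nonzero because self-adjointness gives $Y=\ker\mathcal{L}=\mathrm{span}_{\mathbb{R}}\langle i\psi_*\rangle$. The transversality hypothesis \eqref{dim-cond-2} thus ensures invertibility of $\partial_\Omega g(0,0,0)$, and a second application of the scalar implicit function theorem yields a smooth function $\widetilde{\Omega}(\eta,\beta)$ with $\widetilde{\Omega}(0,0)=0$; setting $\widetilde{\psi}(\eta,\beta) := \psi_* + v(\widetilde{\Omega}(\eta,\beta),\eta,\beta)$ completes the construction. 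The main subtlety I anticipate is the correct reading of \eqref{dim-cond-2}, which packages derivatives in all three parameters $(\Omega,\eta,\beta)$ into a single one-dimensional subspace complementary to $\mathcal{L} H_m^2(\mathbb{C})$; one must verify that the $\Omega$-derivative is the one providing the transversal direction, while $\eta$ and $\beta$ serve only as unfolding parameters. Once this is in place, the argument reduces to the smooth implicit function theorem packaged in \cite{RePe98}.
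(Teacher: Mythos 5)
Your argument is correct, but it is worth pointing out that the paper does not actually prove Proposition \ref{prop-IFT}: the proposition is presented as a restatement of Theorem 3.1 in \cite{RePe98}, adapted to the present notation, and the paper's only real work is the verification of the hypotheses \eqref{dim-cond-1} and \eqref{dim-cond-2} carried out in Lemma \ref{lem-per-arg}. What you have done instead is give a self-contained proof of the relevant special case by Lyapunov--Schmidt reduction: splitting off the gauge direction $\mathrm{span}_{\mathbb{R}}\langle i\psi_*\rangle$ from the domain, using self-adjointness and Fredholmness of index zero to identify the cokernel $Y$ with $\ker\mathcal{L}$, solving the auxiliary equation by the classical implicit function theorem on the slice $X$, and then solving the scalar bifurcation equation in $\Omega$ using that $Q\bigl[i\lambda_*\psi_*\bigr]\neq 0$, which is exactly what \eqref{dim-cond-2} guarantees. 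This is essentially the mechanism behind the Recke--Peterhof theorem, so your route buys a self-contained argument at the cost of redoing work the paper delegates to a citation. Two small points to keep in mind: first, the uniqueness you obtain is uniqueness within the slice $\psi_*+X$ (equivalently, modulo the gauge orbit), since $e^{i\omega}\widetilde{\psi}$ solves the same equation with the same $\widetilde{\Omega}$; this is the correct reading of the equivariant statement but should be said explicitly. Second, your reading of $D_{(\Omega,\eta,\beta)}\mathcal{H}(\psi_*,0,0,0)$ as the $\Omega$-derivative $i\lambda_*\psi_*$, with $\eta$ and $\beta$ acting only as unfolding parameters, agrees with how the paper uses the hypothesis in Lemma \ref{lem-per-arg}, so the transversality step is sound.
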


\begin{lemma}\emph{\textbf{Perturbation Argument.}} \label{lem-per-arg}
For any given $(\lambda_*, \psi_*) \in \mathcal{C}_{k,\iota}$, there exists an $\epsilon > 0$ that admits a smooth parametrization of nontrivial solutions $( \psi, \Omega) = (\widetilde{\psi}(\eta, \beta), \widetilde{\Omega}(\eta, \beta))$ of \eqref{sec2:elliptic} for all $0 \le |\eta|,\,|\beta| < \epsilon$. Moreover, $\widetilde{\psi}(0,0) = \psi_*$ and $\widetilde{\Omega}(0,0) = 0$.
\end{lemma}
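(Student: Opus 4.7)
The plan is to verify the two hypotheses of the Equivariant Implicit Function Theorem (Proposition \ref{prop-IFT}) at the base point $(\psi_*, 0, 0, 0)$ and then simply invoke it. The global gauge symmetry of the $(\eta,\beta)=(0,0)$ equation produces a one-parameter orbit $\omega \mapsto e^{i\omega}\psi_*$ of solutions through $\psi_*$, whose infinitesimal generator $i\psi_*$ automatically lies in $\ker\mathcal{L}$. So the task splits into (a) showing that the kernel is \emph{exactly} one-dimensional, so that no accidental kernel beyond the gauge direction interferes, and (b) exhibiting a direction in $(\Omega,\eta,\beta)$-space whose image under $D\mathcal{H}$ transversally complements $\mathcal{L}H_m^2(\mathbb{C})$.

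For (a), I write $U\in H_m^2(\mathbb{C})$ as $U = (U_R + iU_I)\,e^{im\varphi}$ with real-valued $U_R, U_I$. Because $\psi_* = u_* e^{im\varphi}$ with real amplitude $u_*$, a direct expansion of \eqref{gat-2} decouples $\mathcal{L}[U]=0$ into two independent real equations
\begin{equation*}
\bigl[\Delta_{\mathcal{M}} + \lambda_*(1 - 3u_*^2)\bigr]\bigl(U_R\, e^{im\varphi}\bigr) = 0,
\qquad
\bigl[\Delta_{\mathcal{M}} + \lambda_*(1 - u_*^2)\bigr]\bigl(U_I\, e^{im\varphi}\bigr) = 0.
\end{equation*}
The first is the real linearization $\mathcal{L}_{(\lambda_*,\psi_*)}$ of \eqref{linear-at-solution}, whose kernel on $L_m^2(\mathbb{R})$ is trivial by the hyperbolicity Lemma \ref{main-thm}; so $U_R\equiv 0$. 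The second operator already has $u_* e^{im\varphi}$ in its kernel, since $\psi_*$ solves \eqref{intro:ell}; restricting it to $L_m^2(\mathbb{R})$ reduces it to a singular Sturm-Liouville ODE on $[0,s_*]$ whose eigenvalues are simple by the same argument as in \cite{Da20}, Lemma 4.2. Thus $U_I$ is forced to be a real multiple of $u_*$, giving $\ker \mathcal{L} = \mathrm{span}_{\mathbb{R}}\{i\psi_*\}$.

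For (b), self-adjointness and Fredholm index zero of $\mathcal{L}$ on $L_m^2(\mathbb{C})$ (see \eqref{gat-2}) imply that its real cokernel is also spanned by $i\psi_*$. Differentiating $\mathcal{H}$ from \eqref{sec2:elliptic} in $\Omega$ at $(\psi_*,0,0,0)$ yields
\begin{equation*}
D_{\Omega}\mathcal{H}(\psi_*, 0, 0, 0) = i\lambda_*\psi_*,
\end{equation*}
a nonzero real multiple of the cokernel generator. This delivers the transversality decomposition \eqref{dim-cond-2} directly, and Proposition \ref{prop-IFT} then produces the smooth local parametrization $(\eta,\beta) \mapsto (\widetilde{\psi}(\eta,\beta), \widetilde{\Omega}(\eta,\beta))$ on some neighborhood $\{\abs{\eta}, \abs{\beta} < \epsilon\}$ with $\widetilde{\psi}(0,0) = \psi_*$ and $\widetilde{\Omega}(0,0) = 0$.

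The main obstacle I anticipate is step (a) for the imaginary-part operator $\Delta_{\mathcal{M}} + \lambda_*(1-u_*^2)$: the hyperbolicity Lemma \ref{main-thm} concerns the real linearization and says nothing about this operator, so simplicity of its kernel must be extracted separately from the Sturm-Liouville reduction on the radial variable. Without this simplicity, the gauge direction could sit inside a higher-dimensional kernel and break the hypothesis $\dim\ker\mathcal{L}=1$ of Proposition \ref{prop-IFT}; everything else is a routine computation of derivatives and an appeal to the already-established Fredholm and self-adjointness properties.
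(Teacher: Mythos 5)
Your proposal is correct and follows essentially the same route as the paper: decoupling $\mathcal{L}[U]=0$ into the real part (killed by hyperbolicity, Lemma \ref{main-thm}) and the imaginary part (whose kernel is exactly $\mathrm{span}_{\mathbb{R}}\{i\psi_*\}$ by simplicity/uniqueness of the bounded solution of the singular Sturm--Liouville ODE, cited from \cite{Da20}), then verifying the cokernel condition via $D_\Omega\mathcal{H}=i\lambda_*\psi_*$ and self-adjointness. The paper verifies \eqref{dim-cond-2} by an explicit integration against $\overline{\psi_*}$, while you invoke the abstract identification of the cokernel of a self-adjoint index-zero Fredholm operator with its kernel; these are equivalent.
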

\begin{proof}[\textbf{Proof}]

In order to apply Proposition \ref{prop-IFT}, we must verify the conditions (\ref{dim-cond-1}) and (\ref{dim-cond-2}). 

For the condition (\ref{dim-cond-1}), since $\mathrm{span}_\mathbb{R} \langle i \, \psi_* \rangle  \subset \mathrm{ker}\mathcal{L}$, we need to show $\mathrm{span}_\mathbb{R} \langle i \, \psi_* \rangle = \mathrm{ker}\mathcal{L}$. By (\ref{gat-2}), the linear equation $\mathcal{L} [U] = 0$ for $U \in H_{m}^2(\mathbb{C})$, $U(s, \varphi) = (U_R(s) + i\, U_I(s))\, e^{im\varphi}$, is equivalent to the following decoupled system:
\begin{align}  \label{dec-eq-1}
0= & \left( \Delta_{\mathcal{M}}
+ \lambda_*\, (1 - 3\, |\psi_*|^2) \right) U_R \, e^{im\varphi},
\\ 
\label{dec-eq-2}
0= & \left( \Delta_{\mathcal{M}}
+ \lambda_*\, (1- | \psi_*|^2) \right) U_I \, e^{im\varphi}.
\end{align} 
As we assume Lemma \ref{main-thm}, hyperbolicity of the vortex equilibrium $(\lambda_*, \psi_*)$ is equivalent to the fact that the kernel of $\mathcal{L}_{(\lambda_*,\psi_*)} := 
\Delta_{\mathcal{M}}
+ \lambda_*\,( 1- 3 \, |\psi_*|^2)$ in (\ref{dec-eq-1}) is trivial; see also (\ref{linear-at-solution}). Hence $U_R(s) \, e^{im\varphi}$ is identically zero. To show $U_I(s) \, e^{im\varphi} = c \, \psi_*(s, \varphi)$ for some $ c\in \mathbb{R} \setminus \{0\}$, note that (\ref{dec-eq-2}) is equivalent to the linear second-order ODE with unbounded coefficients as $s \searrow 0$:
\begin{equation}
0=U_I^{''} + \frac{a'}{a} U'_I - \frac{m^2}{a^2}U_I + \lambda_*\, (1 - |\psi_*|^2) \, U_I,    
\end{equation}
which possesses only one nontrivial bounded solution (up to nonzero real multiples); see the proof of \cite[Lemma 3.2]{Da20}. Then notice that as a vortex equilibrium $(\lambda_*, \psi_*)$ already satisfies (\ref{dec-eq-2}). 

To verify the condition (\ref{dim-cond-2}), since $\mathcal{L}$ is self-adjoint on $L_m^2(\mathbb{C})$ and 
\begin{equation}
D_{(\Omega, \eta, \beta)} \mathcal{H}(\psi_*,0, 0, 0) = i \,  \lambda_* \, \psi_*,    
\end{equation}
it suffices to show $\mathcal{L} H_m^2(\mathbb{C}) \cap D_{(\Omega, \eta, \beta)} \mathcal{H}(\psi_*,0, 0, 0) \mathbb{R} = \{0\}$, or equivalently, the only real multiple of $i \, \lambda_* \, \psi_*$ that belongs to the range of $\mathcal{L}$ must be zero. Hence we suppose $\mathcal{L}[V] = c\, i \, \lambda_* \, \psi_*$ for some $V = (V_R + i\, V_I) \, e^{i m\varphi} \in H_{m}^2(\mathbb{C})$ and $c \in \mathbb{R}$. Then the equation for $V_I \,  e^{im \varphi}$ reads 
\begin{equation} \label{dec-eq-3}
( \Delta_{\mathcal{M}} + \lambda_* \, ( 1 -|\psi_*|^2)) \, V_I \, e^{im\varphi} = c\, \lambda_* \, \psi_*.
\end{equation}
We multiply (\ref{dec-eq-3}) by the complex conjugate $\overline{\psi_*}$, integrate over $\mathcal{M}$, and then obtain $c \, \lambda_* \, |\psi_*|_{L^2} = 0$ by self-adjointness. Since $\psi_*$ is nontrivial, $c = 0$ and so $(\ref{dim-cond-2})$ is verified.
\end{proof}

\subsection{Global attractor of transitions between vortices}

In this section we prove Theorem \ref{cor:att}. Note that the PDE \eqref{cheq2} generates a bounded dissipative semiflow and possesses a strict Lyapunov function given by \eqref{complex_energy2} restricted to $L_m^2(\mathbb{R})$. 
Therefore, the global attractor exists and its internal dynamics is decomposed into a union of equilibria and their heteroclinic orbits. These attractors for reaction-diffusion equations on a bounded interval are known as \emph{Sturm attractors}, and have been extensively studied in the literature; see \cite{FiedlerRocha96,Hale99,He81,LappicySing} and references therein. The most challenging task is to characterize which pair of equilibria admits a heteroclinic orbit between them; see \cite{FiedlerRocha96} for PDEs with bounded coefficients and \cite{LappicySing} for PDEs with unbounded coefficients. 

The proof of Theorem \ref{cor:att} is a consequence of the following three lemmata. 

\begin{lemma}\emph{\textbf{Cascading.}}\label{cascading} 
Let $u_-$ and $u_+$ be a pair of amplitudes of vortex equilibria of the Ginzburg--Landau equation \eqref{cheq2} such that $n := i (u_-) - i(u_+) > 0$. Then the following statements are equivalent:
\begin{itemize}
\item[(i)] There exists a heteroclinic orbit from $u_-$ to $u_+$ in forward time, as in (\ref{het}).
\item[(ii)] There exists a sequence (cascade) of vortex equilibria $\{ u_j\}_{j=0}^n$ with $u_0:=u_-$ and $u_n:=u_+$ such that the following holds for all $j=0,...,n-1$:
 \begin{enumerate}
     \item $i(u_{j+1})=i(u_j)+1$;
     \item there exists a heteroclinic orbit from $u_{j+1}$ to $u_j$ in forward time.
 \end{enumerate} 
\end{itemize}
\end{lemma}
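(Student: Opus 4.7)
The overall strategy is to cast the semiflow generated by \eqref{cheq2} on $L_m^2(\mathbb{R})$ as a Morse--Smale system and then apply a classical cascading argument. The strict Lyapunov functional constructed as in \cite{LappicySing} together with hyperbolicity of all equilibria (Lemma \ref{main-thm}) produces the gradient-like decomposition of $\mathcal{A}_m$ into equilibria and heteroclinic orbits. To upgrade this to the Morse--Smale structure, I would invoke the Sturm nodal property: the zero number $z(u_1(t,\cdot) - u_2(t,\cdot))$ of the difference of two bounded solutions is non-increasing in $t$ and drops strictly at any $t_0$ where $u_1(t_0,\cdot) - u_2(t_0,\cdot)$ has a multiple zero in $s$. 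Combined with hyperbolicity, this classically forces automatic transversality $W^u(v_1) \pitchfork W^s(v_2)$ at every heteroclinic intersection, as in the framework of \cite{He81, FiedlerRocha96}.

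With that structure in hand, the direction (ii) $\Rightarrow$ (i) would follow by induction on $n$ via the $\lambda$-lemma (inclination lemma). For the inductive step I would peel off the first link $u_0 \to u_1$ of the cascade and apply the inductive hypothesis to the tail $u_1 \to \cdots \to u_n$ to obtain a direct heteroclinic $\gamma$ from $u_1$ to $u_n = u_+$. The $\lambda$-lemma then guarantees that a small disk $D \subset W^u(u_0)$ transverse to $W^s(u_1)$ at a point of the first link has forward iterates that $C^1$-accumulate on $W^u(u_1)$. Since $\gamma$ crosses $W^s(u_+)$ transversally and $W^s(u_+)$ is closed of finite codimension $i(u_+)$, some forward iterate of $D$ must meet $W^s(u_+)$, yielding the desired direct heteroclinic $u_- \to u_+$.

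For the reverse direction (i) $\Rightarrow$ (ii), I would again induct on $n$, the nontrivial case being $n \geq 2$. Starting from a heteroclinic $\phi$ from $u_-$ to $u_+$, I would analyze the boundary $\partial W^u(u_-)$ of the $i(u_-)$-dimensional disk $W^u(u_-) \subset \mathcal{A}_m$. By Morse--Smale theory in the Sturm context (cf.\ \cite{FiedlerRocha96,Hale99}), $\partial W^u(u_-)$ is stratified by unstable manifolds $W^u(v)$ with $i(v) < i(u_-)$. Since $u_+ \in \overline{W^u(u_-)} \setminus W^u(u_-)$, I would extract an equilibrium $v$ with $i(v) = i(u_-) - 1$ and $u_+ \in \overline{W^u(v)}$; the adjacency criterion defined just before Theorem \ref{cor:att}, together with the zero number as a discrete Lyapunov function, selects precisely such a $v$, in the spirit of \cite{Wolfrum02}. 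This yields heteroclinics $u_- \to v$ and $v \to u_+$, and applying the inductive hypothesis to the latter completes the cascade.

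The step I expect to be hardest is this extraction in (i) $\Rightarrow$ (ii): producing an intermediate $v$ with Morse index exactly $i(u_-) - 1$ such that $u_+ \in \overline{W^u(v)}$. It rests on the fine Sturm stratification of $\partial W^u(u_-)$ as a regular CW-complex, which ultimately collapses $\mathcal{A}_m$ onto the Chafee--Infante template depicted in Figure~\ref{FIGCOR}. By contrast, once Morse--Smale transversality has been secured by the zero number argument, the (ii) $\Rightarrow$ (i) direction via the $\lambda$-lemma is comparatively routine.
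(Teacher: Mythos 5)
Your plan is correct and follows essentially the same route as the paper, which simply defers to Lemma 1.5 of \cite{FiedlerRocha96}: nodal properties of differences of solutions yielding Morse--Smale transversality, transitivity (the $\lambda$-lemma) for (ii)$\Rightarrow$(i), and the stratification of $\partial W^u(u_-)$ by lower-index unstable manifolds for (i)$\Rightarrow$(ii). The one point worth flagging is that the zero-number dropping property must be justified for the singular coefficients of \eqref{cheq2} at the vortices, which the paper handles by invoking \cite{LappicySing}.
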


The proof of Lemma \ref{cascading} relies on nodal properties of solutions, and we refer to \cite[Lemma 1.5]{FiedlerRocha96}. Due to the \emph{cascading principle} it suffices to construct all heteroclinic orbits between equilibria such that their Morse indices differ by one. Note that the implication of $(ii) \rightarrow (i)$ is a special case of a transitivity principle that holds for certain Morse--Smale systems including \eqref{cheq2}.

The second lemma provides a condition that prevents heteroclinic orbits between vortex equilibria with Morse indices differing by one. Before we present its content, we say that two hyperbolic vortex equilibria $u_{j+1}$ and $u_j$ of \eqref{cheq2} with $i(u_{j+1})=i(u_j)+1$ are \emph{blocked} if one of the following conditions holds:
  \begin{enumerate}
     \item \emph{Morse blocking: } $z(u_{j+1}-u_j)\neq i(u_j)$;
     \item \emph{Zero number blocking: } there exists an equilibria $u_*$ between $u_{j+1}$ and $u_j$ along the shooting curve $M^u_s$ given by \eqref{ShootingCurve} for some $s\in [0,s_*]$ such that
     \begin{equation}
         \textcolor{black}{z(u_{j+1}-u_*)=z(u_{j+1}-u_j)=z(u_j-u_*).}
     \end{equation}
 \end{enumerate} 
\begin{lemma} \emph{\textbf{Blocking.}} \label{estabhets}
Let $u_{j+1}$ and $u_j$ be vortex equilibria of the equation \eqref{cheq2} such that $i(u_{j+1})=i(u_j)+1$. Suppose that $u_{j+1}$ and $u_j$ are blocked. Then there does not exist heteroclinic orbits from $u_{j+1}$ to $u_j$ in forward time.
\end{lemma}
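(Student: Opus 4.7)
The plan is to argue by contradiction: assume a heteroclinic $u(t,s)$ from $u_{j+1}$ to $u_j$ exists, and derive a contradiction from each of the two blocking hypotheses via the \emph{zero number dropping principle}. Concretely, for any nontrivial solution $v(t,s)$ of a linear parabolic equation of the form $v_t=v_{ss}+(a'/a)\,v_s+c(t,s)\,v$, the Matano--Angenent zero number lemma asserts that $t\mapsto z(v(t,\cdot))$ is nonincreasing and drops strictly whenever $v(t_0,\cdot)$ has a multiple zero. Applied to differences of two solutions $u(t,\cdot)-\tilde u(t,\cdot)$ of \eqref{cheq2}, the difference satisfies such a linear equation with $c(t,s)=\lambda(1-u^2-u\tilde u-\tilde u^2)-m^2/a^2$, so the principle applies (the singular term $-m^2/a^2$ and the unbounded coefficient $a'/a$ are controlled as in Section~2 of \cite{LappicySing}, cf.\ also the regularity of equilibria at vortices discussed in the shooting setup of Subsection~\ref{subsec:shooting}).

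For Morse blocking, the first step is to compute the asymptotic zero numbers of $u(t,\cdot)-u_j$ as $t\to\pm\infty$. As $t\to+\infty$, $u(t,\cdot)\to u_j$ tangent to the stable manifold; the slowest contracting direction is the top stable eigenfunction of the linearization at $u_j$, which by Sturm--Liouville theory and hyperbolicity (Lemma~\ref{main-thm}) has exactly $i(u_j)=j$ sign changes. As $t\to-\infty$, $u(t,\cdot)\to u_{j+1}$ tangent to the unstable manifold, and the slowest decaying mode is the bottom unstable eigenfunction of the linearization at $u_{j+1}$, which has $i(u_{j+1})-1=j$ sign changes. Combining monotonicity of $t\mapsto z(u(t,\cdot)-u_j)$ with $z(u(t,\cdot)-u_j)\to z(u_{j+1}-u_j)$ as $t\to-\infty$ and $z(u(t,\cdot)-u_j)=j$ for $t\gg 1$ yields $z(u_{j+1}-u_j)\geq j$; the analogous argument on $u(t,\cdot)-u_{j+1}$ yields the reverse inequality. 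Hence $z(u_{j+1}-u_j)=j=i(u_j)$, contradicting Morse blocking.

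For zero number blocking, apply the same monotonicity to $t\mapsto z(u(t,\cdot)-u_*)$. As $t\to-\infty$ it tends to $z(u_{j+1}-u_*)$ and as $t\to+\infty$ to $z(u_j-u_*)$, so nonincreasingness forces $z(u_{j+1}-u_*)\geq z(u_j-u_*)$, and by the blocking hypothesis this inequality is \emph{strict}. The strict drop must occur at some finite $t_0$ where $u(t_0,\cdot)-u_*(\cdot)$ has a multiple zero, i.e.\ the heteroclinic profile is tangent to $u_*$ at some $s_0\in[0,s_*]$. Now invoke the structure of the shooting curve $M^u_{s_0}$ from \eqref{ShootingCurve}: the position of $u_*$ between $u_{j+1}$ and $u_j$ along $M^u_{s_0}$, together with $z(u_j-u_*)=z(u_{j+1}-u_j)$, translates such a tangency into a contradiction with the ordering of zero numbers of differences of consecutive points along $M^u_{s_0}$ (this is precisely the Wolfrum criterion \cite{Wolfrum02}).

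The main obstacle I expect is the justification of the zero number lemma in the present singular Sturm--Liouville setting: the coefficient $a'/a$ blows up at $s=0$ (and at $s=s_*$ when $\partial\mathcal{M}=\emptyset$), and $-m^2/a^2$ is unbounded, so the standard proofs on bounded intervals with regular coefficients do not apply verbatim. The remedy is to control the boundary behavior of zeros via the asymptotic expansion of solutions near vortices from the shooting method in Subsection~\ref{subsec:shooting}, showing that zeros cannot be created or destroyed through $s=0$ or $s=s_*$, and then to transfer the interior dropping lemma of Matano--Angenent to this setting as carried out in \cite{LappicySing}. Once this is in place, both cases collapse to the purely combinatorial arguments sketched above.
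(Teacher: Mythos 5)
Your proposal follows essentially the same route as the paper, which disposes of this lemma in one line by appealing to nodal properties of solutions of \eqref{cheq2} and the discussion of Definition 1.6 in \cite{FiedlerRocha96}; your zero-number-dropping argument (Matano--Angenent applied to differences of solutions, with the singular coefficients $a'/a$ and $m^2/a^2$ handled as in \cite{LappicySing}) is exactly the content of that citation, worked out in more detail for the Morse-blocking case. The one step you leave at the same level of deferral as the paper is the final contradiction in the zero-number-blocking case, where the strict inequality $z(u_{j+1}-u_*)>z(u_j-u_*)$ you derive is by itself consistent with the hypothesis and the actual contradiction is handed off to the Wolfrum/Fiedler--Rocha machinery \cite{Wolfrum02}, just as the authors do.
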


The proof of Lemma \ref{estabhets} follows from nodal properties of solutions of \eqref{cheq2}; see the subsequent discussion of Definition 1.6 in \cite{FiedlerRocha96}. 

The third and final lemma is an act of liberalism: If a heteroclinic connection among two equilibria is not forbidden by the blocking law, then a connection between them does exist. Such a \emph{liberalism} follows from an application of the Conley index theory; see \cite[Lemma 1.7]{FiedlerRocha96}. 

\begin{lemma} \emph{\textbf{Liberalism.}} \label{estabhets2}
Let $u_{j+1}$ and $u_j$ be hyperbolic vortex equilibria of the equation \eqref{cheq2} such that $i(u_{j+1})=i(u_j)+1$. Suppose that $u_{j+1}$ and $u_j$ are not blocked. Then there exists a heteroclinic orbit from $u_{j+1}$ to $u_j$ in forward time.
\end{lemma}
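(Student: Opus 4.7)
My strategy is to apply the classical liberalism argument of \cite{FiedlerRocha96}, Lemma 1.7, after reducing the singular Sturm--Liouville problem \eqref{cheq2} to a Morse--Smale gradient-like semiflow on $H^{2\gamma}(\mathcal{M},\mathbb{R}) \cap L^2_m(\mathbb{R})$. Hyperbolicity of all equilibria comes from Lemma \ref{main-thm}, transversality of stable and unstable manifolds is automatic for scalar Sturm systems, and a strict Lyapunov function has already been invoked above. With these ingredients the global attractor is a Sturm attractor whose heteroclinic structure is determined by nodal combinatorics alone, and it remains only to detect $u_j$ inside the unstable set of $u_{j+1}$.

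The essential nodal tool is the zero number dropping lemma: for any pair of classical solutions $u, v$ of \eqref{cheq2}, the map $t \mapsto z\bigl(u(t,\cdot) - v(t,\cdot)\bigr)$ is non-increasing in $t$ and drops strictly at any instant where $u - v$ develops a multiple zero in the interior $(0,s_*)$. The coefficients of \eqref{cheq1} are singular at the vortex endpoints, but the shooting method of Section \ref{sec:hyperbolicity} delivers a smooth local parametrization of solutions near those endpoints, so the dropping lemma carries over without modification.

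I would then argue by contradiction: assume that $u_{j+1}$ and $u_j$ are not blocked, yet no heteroclinic orbit from $u_{j+1}$ to $u_j$ exists in forward time. Isolate an invariant set $\mathcal{I}$ consisting of $u_{j+1}$, its unstable set, and those equilibria of Morse index at most $j$ that lie along the shooting curve $M^u_s$ strictly between $u_{j+1}$ and $u_j$ in the shooting parameter. The Morse non-blocking gives $z(u_{j+1}-u_j) = j$, and the zero-number non-blocking prevents any intermediate equilibrium on $M^u_s$ from trapping the dropping sequence; together these force every $u_* \in \mathcal{I} \setminus \{u_{j+1}\}$ to satisfy $z(u_{j+1}-u_*) = z(u_*-u_j) = j$. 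The Conley index of $\mathcal{I}$ computed from the top stratum then equals that of the hyperbolic saddle $u_{j+1}$, namely the homotopy type of a pointed $(j{+}1)$-sphere. If $u_j \notin \mathcal{I}$, every Morse piece of $\mathcal{I} \setminus \{u_{j+1}\}$ has Morse index at most $j$ and the Morse--Conley inequalities yield a Conley index for $\mathcal{I}$ of dimension at most $j$, contradicting the previous computation. Consequently $u_j \in \mathcal{I}$, and the cascading principle (Lemma \ref{cascading}) together with the non-blocking hypothesis identifies the missing connecting orbit as a heteroclinic from $u_{j+1}$ directly to $u_j$.

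The main obstacle is verifying that the isolating neighborhood and index pair used in the last step are well defined despite the coefficient singularity of \eqref{cheq1} at $s=0$ (and at $s=s_*$ when $\partial\mathcal{M}=\emptyset$), and that the shooting-curve formulation of zero-number blocking is genuinely equivalent to the classical adjacency notion of \cite{FiedlerRocha96}. Once these two points are secured, using the shooting framework of Section \ref{sec:hyperbolicity} together with the compactness furnished by the $C^0$-bound \eqref{priori} and parabolic smoothing, the Conley-index bookkeeping is identical to that of \cite{FiedlerRocha96}, Lemma 1.7.
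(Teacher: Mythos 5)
Your proposal takes essentially the same route as the paper, which proves liberalism precisely by invoking the Conley-index argument of \cite{FiedlerRocha96}, Lemma 1.7, with the singular-coefficient issues delegated to the shooting framework of Section \ref{sec:hyperbolicity} and to \cite{LappicySing}; your identification of the two points needing verification (the dropping lemma near the vortex endpoints and the equivalence of shooting-curve adjacency with the classical notion) is exactly where the paper places the burden. One caution: your illustrative claim that the Conley index of the invariant set $\mathcal{I}$ containing the closure of the unstable set of $u_{j+1}$ equals the pointed $(j{+}1)$-sphere of the saddle alone is not correct as stated (the index of such a set is generally not that of its repeller), but since you ultimately defer the index bookkeeping to the cited lemma this does not affect the overall argument.
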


Therefore, the blocking and liberalism principles assert that the information of the Morse indices and zero numbers are sufficient to construct the global attractor explicitly. Indeed, these two quantities are determined by the shooting curves \eqref{ShootingCurve} that we construct in Subsection \ref{subsec:shooting}. In particular, Theorem \ref{cor:att} follows from Section \ref{sec:hyperbolicity}, as we show that \eqref{cheq2} possesses the same shooting curve as the axisymmetric Chafee--Infante equation studied in \cite{LappicySing}, and thus the global attractor $\mathcal{A}_m$ of Ginzburg--Landau $m$-armed vortex solutions coincides with the Chafee--Infante attractor as shown in Figure \ref{FIGCOR}.

\section{Hyperbolicity of Vortex Equilibria}\label{sec:hyperbolicity}

In this section we present the framework of proof for hyperbolicity of vortex equilibria in Lemma \ref{main-thm}. First, we study the asymptotic behavior of bounded solutions of the ODE (\ref{cheq1}) near vortices, and then extract the shooting parameter. Next, we define the shooting manifolds as the unstable manifold of the vortex at $s = 0$, and if $\partial \mathcal{M}$ is empty, also the stable manifold of the other vortex at $s = s_*$. When $\partial \mathcal{M}$ is nonempty, the shooting curve is the section of the shooting manifold on the boundary. When $\partial \mathcal{M}$ is empty, the section at $s = s_*/2$ of the shooting manifolds yields two shooting curves.
Note that hyperbolicity of vortex equilibria in $L^2_m(\mathbb{R})$ that satisfy the ODE (\ref{cheq1}) with unbounded coefficients is equivalent to transverse intersections between the shooting curve of such ODEs and the line associated with prescribed linear separate boundary conditions; see \cite[Lemma 2.4]{LappicySing}. Hence our idea of proof is to study the shooting curves defined by the ODE (\ref{cheq1}).

\subsection{Shooting Curves}\label{subsec:shooting}

The ODE (\ref{cheq1}) possesses unbounded coefficients as $s \searrow 0$, and as $s \nearrow s_*$ if in addition $\partial \mathcal{M}$ is empty. Thus we apply the Euler multiplier  
\begin{equation} \label{euler}
\bigg( \frac{\mathrm{d}s}{\mathrm{d}\tau} := \bigg) \, \, \dot{s} = a(s),
\end{equation}
to transform (\ref{cheq1}) into 
\begin{equation} \label{cheq2shoot}
\ddot{u} - m^2u + \lambda \, a^2(s(\tau)) \,(1-u^2) \, u = 0
\end{equation}
for $\tau \in (-\infty, \tau_*)$ so that all coefficients are bounded. 

Note that we can recover the original variable $s \in [0, s_*]$ via the mapping $\tau = \tau(s)$ such that $\tau'(s) = 1/a(s)$ and $\lim_{s \searrow 0} \tau(s) = - \infty$. Moreover, $\tau_* := \lim_{s \nearrow s_*} \tau(s) =  \infty$ if $\partial \mathcal{M}$ is empty, and $\tau_* := \tau(s_*) <\infty $ if $\partial \mathcal{M}$ is nonempty.

We recast (\ref{cheq2shoot}) into the following autonomous ODE system:
\begin{align} \label{odesys}
\dot{u} &= v, \nonumber
\\
\dot{v} &= m^2 u  - \lambda \, a^2(s) (1-u^2) \,u,
\\
\dot{s} &= a(s). \nonumber
\end{align}
Clearly, (\ref{odesys}) possesses the homogeneous equilibrium $(u,v,s) = (0,0,0)$, and another homogeneous equilibrium $(u,v,s) = (0,0, s_*)$ if in addition $\partial \mathcal{M}$ is empty. Our first lemma guarantees that all solutions converge to these two equilibria as $|\tau| \rightarrow \infty$.

\begin{lemma} \label{lem1}
Let $\psi(s,\varphi) = u(s) e^{im \varphi}$ be a smooth solution of \emph{(\ref{intro:ell})}. Then after applying the Euler multiplier \emph{(\ref{euler})}, we have
\begin{equation}
\lim_{\tau \rightarrow -\infty}u(\tau) = 0, \quad \lim_{\tau \rightarrow -\infty} \dot{u}(\tau) = 0.
\end{equation}
If in addition $\partial \mathcal{M}$ is empty, then 
\begin{equation}
\lim_{\tau \rightarrow \infty}u(\tau) = 0, \quad \lim_{\tau \rightarrow \infty} \dot{u}(\tau) = 0.
\end{equation}
\end{lemma}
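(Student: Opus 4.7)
The plan is to exploit the $C^{2,\nu}$-regularity of $\psi$ on $\mathcal{M}$ at the vortex $s=0$ (and at $s=s_*$ when $\partial\mathcal{M}$ is empty) in order to deduce the asymptotic limits in the rescaled time variable $\tau$. The recast system (\ref{odesys}) has bounded coefficients, so what we genuinely need is that the trajectory coming from a smooth $\psi$ approaches the homogeneous equilibrium $(u,v,s)=(0,0,0)$ as the original arc length parameter tends to $0$.

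First I would verify that $\tau\to-\infty$ corresponds precisely to $s\to 0^+$. From the Euler multiplier (\ref{euler}) we have $\tau'(s)=1/a(s)$, and the arc length normalization $a'(0)=1$ together with smoothness gives $a(r)=r+O(r^{1+\nu})$ near $r=0$, so the integral $\tau(s)-\tau(s_0)=\int_{s_0}^{s}dr/a(r)$ diverges to $-\infty$ as $s\to 0^+$. When $\partial\mathcal{M}$ is empty, the reflectional symmetry (\ref{refsym}) yields $a(s_*)=0$ and $a'(s_*)=-1$, and an analogous estimate shows $\tau(s)\to+\infty$ as $s\to s_*$. This reduces the two limits in the lemma to the statements $u(s),\,a(s)u'(s)\to 0$ as $s\to 0^+$ and, in the boundaryless case, as $s\to s_*$.

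The core of the argument is the claim $u(0)=0$. Since $\psi(s,\varphi)=u(s)e^{im\varphi}$ is a single-valued $C^{2,\nu}$ function on $\mathcal{M}$, and at the vortex $s=0$ the entire $S^1$-fiber in $\varphi$ collapses to one point of $\mathcal{M}$, continuity of $\psi$ at that point is possible only if $u(0)=0$, using $m\ge 1$ so that the character $e^{im\varphi}$ is nonconstant in $\varphi$. Then $C^{2,\nu}$-regularity of $u$ on $[0,s_*]$ yields $\lim_{s\to 0^+}u(s)=0$, and since $u'$ is bounded on $[0,s_*]$ while $a(s)\to 0$, we also get $\lim_{s\to 0^+}a(s)u'(s)=0$. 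Translating through $\dot u=a\,u'$ produces both limits at $\tau=-\infty$.

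For the boundaryless case, the reflectional symmetry (\ref{refsym}) makes $s=s_*$ structurally identical to $s=0$ (same vanishing of $a$, same magnitude of $a'$), so the same argument gives $u(s_*)=0$ and $\dot u\to 0$ as $\tau\to+\infty$. I do not foresee a serious obstacle; the only delicate point is the justification that smoothness of $\psi$ across the vortex truly forces $u(0)=0$ rather than merely the vanishing of some higher derivative, and this is secured by nonconstancy of the $m$-th character on the collapsed $S^1$-fiber.
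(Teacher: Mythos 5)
Your proposal is correct and follows essentially the same route as the paper's proof: continuity (single-valuedness) of $\psi$ at the collapsed $S^1$-fiber forces $\lim_{s\searrow 0}u(s)=0$ because $e^{im\varphi}$ is nonconstant, and the $C^{2,\nu}$-regularity of $\psi$ bounds $u'$ so that $\dot u = a(s)\,u'(s)\to 0$ since $a(0)=0$, with the symmetric argument at $s=s_*$ when $\partial\mathcal{M}$ is empty. The only cosmetic difference is that you verify explicitly that $\int \mathrm{d}s/a(s)$ diverges, which the paper takes for granted.
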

\begin{proof}[\textbf{Proof.}]
Since $\tau =- \infty$ corresponds to $s = 0$ by the Euler multiplier (\ref{euler}), continuity of $\psi(s,\varphi)$ at $s =0$ implies $\lim_{s \searrow 0} \psi(s,\varphi) = \lim_{s \searrow 0} \psi(s,\varphi + \pi)$. Thus $\lim_{s \searrow 0}u(s) = 0$, and so $\lim_{\tau \rightarrow -\infty} u(\tau) = 0$. 

By the chain rule $\dot{u}(\tau) = u'(s) \, a(s)$, where $s = s(\tau)$ is solved by (\ref{euler}), it is equivalent to show $\lim_{s \searrow 0} u'(s) \, a(s) = 0$. Since $\psi$ solves (\ref{intro:ell}), we have $\psi \in C^{2,\nu}(\mathcal{M},\mathbb{C})$ by elliptic regularity. In particular  
\begin{equation}
|\nabla \psi|_{C^0} = \sup_{s \in [0,s_*]} \bigg( |u'(s)|^2 + \frac{m^2}{a^2(s)} \, |u(s)|^2 \bigg) < \infty
\end{equation}
and thus $u'(0)$ exists. Hence $\lim_{s \searrow 0} u'(s) \, a(s) = 0$ because $a(0) = 0$.

The proof for the case $\partial \mathcal{M}$ being empty is analogous because $a(s_*) = 0$.
\end{proof}

Note that the trivial equilibrium $(u,v,s) = (0,0,0)$ of \eqref{odesys} is hyperbolic with \textcolor{black}{three eigenvalues $1$ and $\pm m$ corresponding to the eigendirections given by $(0,0, 1)$ and $(1, \pm m, 0)$, respectively}. Therefore, Lemma \ref{lem1} implies that solutions of \eqref{odesys} are in the unstable manifold of the trivial equilibrium. Moreover, linear analysis shows that every nontrivial bounded solution satisfies the following asymptotic expansion as $\tau \rightarrow -\infty$:
\begin{equation} \label{exp1}
u(\tau) = d \, e^{m \tau} + g(\tau)
\end{equation}
for some $d \neq 0$ and smooth function $g(\tau)$ that satisfies 
\begin{equation} \label{g1}
\lim_{\tau \rightarrow -\infty} \frac{g(\tau)}{e^{m \tau}} = 0.
\end{equation}

Here we notice that the shooting argument used in the literature \cite{Gr80, Ha82, KoHo81, Ts10} assumes that $g$ exists, and thus the corresponding asymptotic expansion is formal. Here, however, our shooting method proves the existence of $g$ via the Euler multiplier and the unstable manifold of the trivial equilibrium.

The only bounded solution that does not satisfy the asymptotic expansion (\ref{exp1}) is the trivial equilibrium, which occurs when $d=0$, and it corresponds to a one-dimensional submanifold of the two-dimensional unstable manifold; see Figure \ref{FIGunstable}.

\begin{figure}[H]
\minipage{0.45\textwidth}\centering
    \begin{tikzpicture}[scale=0.5]
    \draw[<->] (-3,0) -- (3,0) node[right] {\footnotesize{$(0,0,1)$}};
    \draw[<<->>] (0,-3) -- (0,3) node[above] {\footnotesize{$(1,m,0)$}};    
    
    \filldraw (0,0) circle (4.5pt);
    
    
    \draw [domain=0:1.75,variable=\t,smooth,->>] plot ({(\t)^2},{\t});    
    \draw [domain=0:1.75,variable=\t,smooth,->>] plot ({(\t)^2},{-\t});
    \draw [domain=0:1.75,variable=\t,smooth,->>] plot ({-(\t)^2},{\t});    
    \draw [domain=0:1.75,variable=\t,smooth,->>] plot ({-(\t)^2},{-\t});    
    \end{tikzpicture}
\endminipage\hfill 
\minipage{0.45\textwidth}\centering
   \begin{tikzpicture}[scale=1.9]
    \draw[-] (0,0) -- (1.1,0) node[anchor=west]{$s$};
    \draw[dashed,-] (0,0) -- (0,0.65);\draw[-] (0,0.65) -- (0,0.9) node[anchor=south]{$v$};    
     \draw[-] (0,-0.9) -- (0,0);
    \draw[-] (0,0) -- (-0.9,-0.3) node[anchor=east]{$u$};
    \draw[-, dashed] (0,0) -- (0.6,0.2);\draw[-] (0.6,0.2) -- (0.9,0.3);
    \draw[->>] (0,0) -- (-0.9,0.1) node[anchor=south]{\footnotesize{$(1,m,0)$}};
    \draw[->] (0,0) -- (0.7,0);
    \node[below] at (0.46,0) {\footnotesize{$(0,0,1)$}};
    
    \draw [domain=-0.5:0,variable=\t,smooth,<<-] plot ({\t},{3.2*(\t)^2});      
    \draw [domain=0:0.3,variable=\t,smooth,->>, dashed] plot ({\t},{-8.8*(\t)^2});      

    \filldraw (0,0) circle (1pt);
    
    \draw [very thick,domain=0:1.5,variable=\t,smooth,shift={(0,0.42)}] plot ({\t-0.5},{0.4*(cos(2*\t r))});  
    \draw [very thick,domain=0.05:0.75,variable=\t,smooth,shift={(0,-0.42)}, dashed] plot ({\t+0.25},{-0.4*(cos(4*\t r))});    
    \end{tikzpicture}
\endminipage
\caption{Consider the case $m > 1$. On the left, the linear flow corresponds to the tangent space of the two-dimensional unstable manifold of the trivial equilibrium $(0,0,0)$. On the right, \textcolor{black}{the unstable manifold contains a one-dimensional curve (in bold) parametrized by $d \in \mathbb{R}$. Note that the unstable manifold inherits the symmetry $(u, v) \mapsto (-u, -v)$ of the equation \eqref{odesys}.}} \label{FIGunstable}
\end{figure}

We define 
\begin{equation} \label{new-var1}
w(\tau) := \frac{u(\tau)}{e^{m\tau}}.
\end{equation}
Thus by (\ref{exp1}) we have 
\begin{equation} \label{new-var2}
w(\tau) = d + \frac{g(\tau)}{e^{m\tau}}.
\end{equation}
Substituting $u(\tau) = w(\tau) e^{m\tau}$ into (\ref{cheq2shoot}) yields
\begin{equation} \label{cheq3}
\ddot{w} + 2 \, m \, \dot{w} + \lambda \, a^2(s(\tau)) \,(1-e^{2m\tau}w^2) \, w= 0.
\end{equation}
The following lemma forces us to solve (\ref{cheq3}) by imposing the Neumann boundary condition at $\tau  =-\infty$.

\begin{lemma} \label{lem2}
If $w(\tau)$ is a smooth bounded solution of \emph{(\ref{cheq3})}, then 
\begin{equation} \label{w-neu}
    \lim_{\tau \rightarrow -\infty} \dot{w}(\tau) = 0.
\end{equation}
\end{lemma}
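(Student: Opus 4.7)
The plan is to view equation \eqref{cheq3} as a first-order linear ODE for $\phi := \dot{w}$ with exponentially decaying forcing, and then to exploit the boundedness of $w$ to eliminate the unbounded homogeneous mode. The strategy is essentially an asymptotic integration followed by a contradiction argument that kills the dangerous growing solution.

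First, I would quantify the decay of the coefficient $a^{2}(s(\tau))$ as $\tau\to-\infty$. The $C^{2,\nu}$-smoothness of $\mathcal{M}$ together with the arc-length parametrization already gives $a'(0)=1$, so $a(s)=s+O(s^{2})$ near the vortex. Combining this with the Euler multiplier relation $\dot{s}=a(s)$ and a Gronwall-type comparison yields $s(\tau)=O(e^{\tau})$ as $\tau\to-\infty$, and hence $a^{2}(s(\tau))=O(e^{2\tau})$. Since $w$ is bounded and $m\geq 1$, the forcing term
\[
h(\tau):=-\lambda\, a^{2}(s(\tau))\,(1-e^{2m\tau}w^{2}(\tau))\,w(\tau)
\]
therefore satisfies $|h(\tau)|=O(e^{2\tau})$.

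Next, applying variation of parameters to $\dot{\phi}+2m\phi=h(\tau)$, every solution has the form
\[
\phi(\tau)=A\,e^{-2m\tau}+\phi_{p}(\tau), \qquad \phi_{p}(\tau):=e^{-2m\tau}\int_{-\infty}^{\tau}e^{2m\sigma}\,h(\sigma)\,d\sigma,
\]
where $A\in\mathbb{R}$ is arbitrary and the defining integral for $\phi_{p}$ converges absolutely because $|e^{2m\sigma}h(\sigma)|=O(e^{(2m+2)\sigma})$. A direct estimate then yields $|\phi_{p}(\tau)|=O(e^{2\tau})\to 0$. If on the other hand $A\neq 0$, the homogeneous mode $A\,e^{-2m\tau}$ dominates $\phi$ as $\tau\to-\infty$; integrating the identity $w(\tau)=w(\tau_{0})-\int_{\tau}^{\tau_{0}}\phi(\sigma)\,d\sigma$ from $\tau$ up to any fixed $\tau_{0}$ would then force $w(\tau)$ to diverge like $\tfrac{A}{2m}\,e^{-2m\tau}$, contradicting the boundedness hypothesis on $w$. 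Hence $A=0$, and therefore $\dot{w}(\tau)=\phi_{p}(\tau)\to 0$.

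The only genuinely delicate point is the exponential decay estimate $s(\tau)=O(e^{\tau})$, which is where the smoothness of $\mathcal{M}$ at the vortex enters essentially through $a'(0)=1$; once this asymptotic rate for $h$ is in hand, the rest of the proof is a routine variation-of-parameters computation combined with the dichotomy between the growing homogeneous mode $e^{-2m\tau}$ and the decaying particular solution.
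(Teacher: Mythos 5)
Your proof is correct, but it takes a genuinely different route from the paper's. The paper does not argue from a general bounded solution of \eqref{cheq3} directly: it invokes the unstable-manifold expansion \eqref{exp1}--\eqref{g1}, writes $w(\tau)=d+g(\tau)e^{-m\tau}$ as in \eqref{new-var2}, reads off from the equation that $\ddot g(\tau)/e^{m\tau}\to 0$ (using only $a(s(\tau))\to 0$, with no rate), and then recovers $\dot g(\tau)/e^{m\tau}\to 0$ from the vanishing of the zeroth and second derivatives via a mean-value/Taylor interpolation inequality. You instead treat \eqref{cheq3} as a first-order linear equation for $\phi=\dot w$ with forcing $h=O(e^{2\tau})$, apply variation of parameters, and eliminate the growing homogeneous mode $A\,e^{-2m\tau}$ by integrating back into $w$ and invoking boundedness. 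Your argument is more self-contained --- it proves the lemma exactly as stated, for an arbitrary bounded solution, without appealing to the invariant-manifold structure --- and it yields a quantitative decay rate for $\dot w$; its price is that it requires quantitative decay of the coefficient, $a^{2}(s(\tau))=O(e^{c\tau})$ for some $c>0$. On that one delicate point: the direct comparison $\dot s=a(s)\ge(1-\epsilon)s$ near $s=0$ gives only $s(\tau)=O(e^{(1-\epsilon)\tau})$ rather than the clean $O(e^{\tau})$ unless you bootstrap once more, but any positive exponential rate already guarantees absolute convergence of your integral and the decay of $\phi_{p}$, so this does not affect the validity of the proof.
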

\begin{proof}[\textbf{Proof.}]
By (\ref{new-var2}), we know 
\begin{equation}
\lim_{\tau \rightarrow -\infty} \dot{w}(\tau) = \lim_{\tau \rightarrow -\infty} \frac{\dot{g}(\tau) - m \, g(\tau)}{e^{m\tau}}.
\end{equation}
Due to (\ref{g1}), it suffices to show 
\begin{equation}
    \lim_{\tau \rightarrow -\infty} \frac{\dot{g}(\tau)}{e^{m \tau}} = 0.    
\end{equation}
Substituting $w(\tau)$ in (\ref{new-var2}) into (\ref{cheq3}) and using $\lim_{\tau \rightarrow -\infty}a(s(\tau)) = a(0) = 0$, we see
\begin{equation} \label{second}
\lim_{\tau \rightarrow -\infty} \frac{\ddot{g}(\tau)}{ e^{m\tau}} = 0.
\end{equation}
Hence given any $\epsilon > 0$, there exists some $\tilde{\tau} \in (-\infty, \tau_*)$ such that
\textcolor{black}{
\begin{equation} \label{ineq1}
|g(\tau)|, \, |\ddot{g}(\tau)| \le \epsilon \, e^{m\tau} \quad \mbox{for all   } \tau \in (-\infty, \tilde{\tau}).
\end{equation}
We consider $\tau \in (-\infty, \tilde{\tau} - 1)$. We apply the mean value theorem twice and obtain
\begin{align} \label{taylor}
g(\tau+1) - g(\tau) - \dot{g}(\tau) & = \dot{g}(\xi_1) - \dot{g}(\tau)
\\
&= \ddot{g}(\xi_2) (\xi_1 - \tau)  \nonumber
\end{align}
for some $\xi_1 \in (\tau, \tau + 1)$ and $\xi_2 \in (\tau, \xi_1)$. Since $\tau + 1 \in (-\infty, \tilde{\tau})$, $\xi_1 - \tau \le 1$, and $\xi_2 - \tau \le 1$, the bounds in (\ref{ineq1}) applied to \eqref{taylor} imply that 
\begin{align}
|\dot{g}(\tau)| & \le |g(\tau + 1)| + |g(\tau)| + |\ddot{g}(\xi_2)| 
\\&
\le \epsilon \, (e^m + 1 + e^m) \, e^{m \tau} \nonumber
\end{align}
for all $\tau \in (-\infty, \tilde{\tau} -1)$.} The proof is complete.
\end{proof}

If in addition $\partial \mathcal{M}$ is empty, then the equilibrium $(u,v,s) = (0,0, s_*)$ is hyperbolic with two negative eigenvalues $-1$ and $-m < 0$ corresponding to the eigendirections given by $(0,0, 1)$ and $(1, -m, 0)$, respectively. Hence the following asymptotic expansion as $\tau \rightarrow \infty$ holds:
\begin{equation} \label{exp3}
u(\tau) = \tilde{d} \, e^{-m \tau} + h(\tau),
\end{equation}
where $\lim_{\tau \rightarrow \infty} h(\tau)/e^{-m \tau} = 0$.

We define 
\begin{equation} \label{new-var3}
z(\tau) := \frac{u(\tau)}{e^{-m\tau}},
\end{equation}
substitute $u(\tau) = z(\tau)  e^{-m\tau}$ into (\ref{cheq2shoot}), and then obtain
\begin{equation} \label{cheq4}
\ddot{z} - 2 \, m \, \dot{z} + \lambda \, a^2(s(\tau)) \,(1-e^{-2m\tau}z^2) \, z= 0.
\end{equation}
Similarly, we have to impose the Neumann boundary condition at $\tau  =\infty$ to solve (\ref{cheq4}). The proof is the same as the one in Lemma \ref{lem2}.

\begin{lemma} \label{lem3}
If $\partial \mathcal{M}$ is empty and $z(\tau)$ is a smooth bounded solution of \emph{(\ref{cheq4})}, then 
\begin{equation}\label{Ndata}
    \lim_{\tau \rightarrow \infty} \dot{z}(\tau) = 0.    
\end{equation}
\end{lemma}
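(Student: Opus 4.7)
The plan is to mirror the argument of Lemma \ref{lem2} with the sign of the exponent flipped, exploiting the symmetric structure at $\tau=+\infty$ that is available precisely because $\partial\mathcal{M}$ empty forces $a(s_*)=0$, just as $a(0)=0$ drove the proof at $-\infty$.

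First I would combine the expansion \eqref{exp3} with the substitution \eqref{new-var3} to write
\begin{equation*}
z(\tau)=\tilde d+h(\tau)\,e^{m\tau},\qquad \lim_{\tau\to+\infty}h(\tau)\,e^{m\tau}=0.
\end{equation*}
Differentiating gives $\dot z(\tau)=e^{m\tau}\bigl(\dot h(\tau)+m\,h(\tau)\bigr)$, so the target \eqref{Ndata} reduces to proving that $\dot h(\tau)\,e^{m\tau}\to 0$ as $\tau\to+\infty$.

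Second, I would substitute $u=\tilde d\,e^{-m\tau}+h$ into \eqref{cheq2shoot}. Since $(\partial_\tau^2-m^2)(\tilde d\,e^{-m\tau})=0$, the residual equation for $h$ reads
\begin{equation*}
\ddot h(\tau)=m^2 h(\tau)-\lambda\,a^2(s(\tau))\,\bigl(1-u^2(\tau)\bigr)u(\tau).
\end{equation*}
Multiplying by $e^{m\tau}$, the first term on the right tends to zero by definition of $h$, while the second is controlled by a constant times $a^2(s(\tau))\,|z(\tau)|$, which tends to zero because $z$ is bounded and $a(s(\tau))\to a(s_*)=0$ as $\tau\to+\infty$. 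Hence $\ddot h(\tau)\,e^{m\tau}\to 0$.

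Third, I would reuse verbatim the mean-value identity from Lemma \ref{lem2},
\begin{equation*}
h(\tau+\delta)-h(\tau)-\dot h(\tau)\,\delta=\ddot h(\xi_2)\,\delta\,(\xi_1-\tau),
\end{equation*}
applied on a fixed interval of length $\delta>0$ pushed to $+\infty$, to extract $\dot h(\tau)\,e^{m\tau}\to 0$ from the joint decay of $h(\tau)\,e^{m\tau}$ and $\ddot h(\tau)\,e^{m\tau}$. No genuinely new obstacle arises: the proof is Lemma \ref{lem2} under the formal substitution $m\leftrightarrow -m$, with the role of $a(0)=0$ taken over by $a(s_*)=0$, which is guaranteed precisely by the hypothesis that $\partial\mathcal{M}$ is empty.
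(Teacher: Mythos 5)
Your proposal is correct and takes essentially the same approach as the paper: the paper's entire proof of this lemma is the remark that it is identical to Lemma \ref{lem2} under the sign flip $m\mapsto -m$, with $a(s_*)=0$ playing the role of $a(0)=0$, and your argument carries out exactly that mirroring (indeed in more detail than the paper provides). The reduction to $\dot h(\tau)e^{m\tau}\to 0$, the use of the equation to get $\ddot h(\tau)e^{m\tau}\to 0$ via boundedness of $z$ and $a(s(\tau))\to a(s_*)=0$, and the mean-value step all match the template of Lemma \ref{lem2}.
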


Next, we define the shooting curves using the variables $w$, and also $z$ if in addition $\partial \mathcal{M}$ is empty. Indeed, recast (\ref{cheq3}) as the following autonomous ODE system:
\begin{align} \label{odesys-w}
\dot{w} &= p, \nonumber
\\
\dot{p} &= - \lambda \, a^2(s)(1- e^{2m \tau(s)} w^2)\, w -2 \, m \, p,
\\
\dot{s} &= a(s) \nonumber
\end{align}
for $\tau \in (-\infty, \tau_*)$. According to Lemma \ref{lem2} we solve (\ref{odesys-w}) with the Neumann data $(w,p,s) = (d,0,0)$, where $d \in \mathbb{R}$ is the shooting parameter; see Figure \ref{FIGunstableW}.

\begin{figure}[H]
\centering
   \begin{tikzpicture}[scale=2]
    \draw[-] (0,0) -- (1.1,0) node[anchor=north]{$s$};
    \draw[dashed,-] (0,0) -- (0,0.27);\draw[-] (0,0.27) -- (0,0.9) node[anchor=south]{$\dot{w}$};
    \draw[dashed,very thick, -] (0,0) -- (-0.9,-0.3) node[anchor=east]{$w$};
    \draw[dashed,very thick,-] (0,0) -- (0.3,0.11);\draw[very thick,-] (0.3,0.11) -- (0.9,0.3);
    \draw[-] (0,0) -- (0.7,0);

    \filldraw (0,0) circle (1pt);
    
    \draw [domain=0:0.92,variable=\t,smooth,shift={(0,0.04)}] plot ({\t-0.85},{-0.3*(cos(3*\t r))});  
    \draw [domain=0.05:0.8,variable=\t,smooth,shift={(0.55,0)}] plot ({\t+0.25},{0.3*(cos(3*\t r))});    
    
    \draw [domain=-0.92:0,variable=\t,smooth,shift={(1,0)}] plot ({\t},{-0.4*(\t)^3}); \draw [domain=0:0.5,variable=\t,smooth,shift={(1,0)}] plot ({\t},{-1.2*(\t)^3});   
    \end{tikzpicture}
\caption{The bold line is parametrized by $d\in\mathbb{R}$ and describes the Neumann data given by \eqref{w-neu} for the shooting flow  \eqref{odesys-w} in $(w, \dot{w}, s)$-coordinates.} \label{FIGunstableW}
\end{figure}
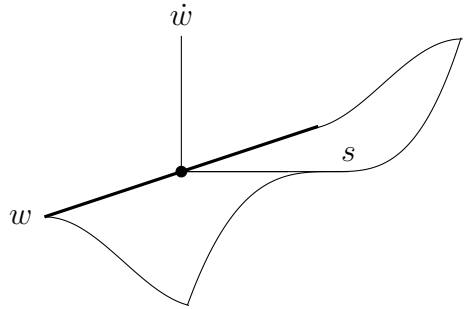

Similarly, if $\partial \mathcal{M}$ is empty, we recast (\ref{cheq4}) as the following autonomous ODE system:
\begin{align} \label{odesys-z}
\dot{z} &= q, \nonumber
\\
\dot{q} &= - \lambda \, a^2(s)(1- e^{-2m \tau(s)} z^2)\, z +2 \, m \, q,
\\
\dot{s} &= a(s). \nonumber
\end{align}
for $\tau \in (-\infty, \textcolor{black}{\infty})$ and solve it with the Neumann data $(z,q,s) = (\tilde{d},0,0)$, where $\tilde{d} \in \mathbb{R}$ is another shooting parameter, according to Lemma \ref{lem3}. 

The Neumann line at $s=0$ is given by
\begin{equation}
L_0:=\{(w,p,s)\in\mathbb{R}^3 \textbf{ $|$ }  (w,p,s)=(d,0,0)\}  ,
\end{equation}
and each point in $L_0$ is a homogeneous equilibrium of (\ref{odesys-w}). Hence, $L_0$ is invariant under the dynamics of (\ref{odesys-w}). Therefore, we cannot evolve $L_0$ under the flow of (\ref{odesys-w}) to define the shooting manifold. Instead, we define the shooting manifold as the unstable manifold of $L_0$ as follows. The linearization of \eqref{odesys-w} at each equilibrium $(d,0,0) \in L_0$ possesses eigenvalues $1$, $-2m$, and $0$ with associated eigenvectors $(0,0,1)$, $(1,-2m,0)$, $(1,0,0)$, respectively. Hence, there is only one unstable direction $(0,0,1)$ parallel to the $s$-axis and thus a one-dimensional unstable manifold denoted by $W^u(d,0,0)$, which is locally a graph $\{(w^u(s,d),p^u(s,d),s)\in\mathbb{R}^3\}$; see \cite{GuckenheimerHolmes83}. 

The union of all unstable manifolds defines the \emph{unstable shooting manifold}:
\begin{equation}\label{M^u}
    M^u := \bigcup_{d\in\mathbb{R}} W^u(d,0,0).
\end{equation}
Similarly, if $\partial \mathcal{M}$ is empty, then by Lemma \ref{lem3} we consider the Neumann line at $s = s_*$
\begin{equation}\label{Neumannline}
    L_{s_*} :=\{(z,q,s)\in\mathbb{R}^3 \textbf{ $|$ }  (z,q,s)=(\tilde{d}, 0, s_*)\}.  
\end{equation}
Note that $L_{s_*}$ consists of homogeneous equilibria and is invariant under the dynamics of (\ref{odesys-z}). Each homogeneous equilibrium $(\tilde{d},0,s_*)\in L_{s_*}$ possesses a one-dimensional stable manifold $W^s(\tilde{d},0, s_*)$, locally given by the graph $\{(z^s(s,\tilde{d}),q^s(s,\tilde{d}), s)\in\mathbb{R}^3\}$ and is tangent to the eigenvector \textcolor{black}{$(0,0,-1)$}. The \emph{stable shooting manifold} is defined by
\begin{equation}\label{M^s}
    M^s:= \bigcup_{\tilde{d}\in\mathbb{R}} W^s(\tilde{d},0,s_*).
\end{equation}
On the other hand, if $\partial \mathcal{M}$ is nonempty, then due to Robin boundary conditions (\ref{Robin-u}), we consider the Robin line 
\begin{equation}\label{RobinLine}
L_{s_*}^{\alpha_1,\alpha_2} = \{\big(w,p,s)\in\mathbb{R}^3 \textbf{ $|$ } (\alpha_1 \, a(s_*) + \alpha_2 \, m\big) w + \alpha_2 \, p = 0, \, s = s_*\}. 
\end{equation}
There is no need to define the stable shooting manifold, since there are no unbounded coefficients of the ODE \eqref{cheq1} at $s = s_*$, due to $a(s_*)\neq 0$.

{\color{black} 

\begin{lemma} \label{lem-global-w}
For any shooting parameter $d \in \mathbb{R}$, the solution of the ODE system \eqref{odesys-w} exists globally. If in addition $\partial \mathcal{M}$ is empty, then for any shooting parameter $\tilde{d} \in \mathbb{R}$, the solution of the ODE system \eqref{odesys-z} exists globally
\end{lemma}
\begin{proof}[\textbf{Proof}]
The local existence of solutions near $\tau = -\infty$ is ensured by the unstable manifold of the equilibrium $(d,0,0)$. Towards a contradiction, suppose that there is a solution $(w(\tau), p(\tau), s(\tau))$ that blows up in finite time $T \in \mathbb{R}$. Thus there exists an increasing sequence $\{\tau_j\}_{j \in \mathbb{N}}$ with $\lim_{j \rightarrow \infty} \tau_j = T$ such that either $\lim_{j \rightarrow \infty} |w(\tau_j)| = \infty$ or $\lim_{j \rightarrow \infty} |p(\tau_j)| = \infty$. We rewrite the second equation in \eqref{odesys-w} as follows: 
\begin{equation} \label{p-vec}
\dot{p} = h_1(\tau, w, p) := - 2 \, m \, p - \lambda \, a^2(s(\tau)) \, w + \lambda \, a^2(s(\tau)) \, e^{2m \tau} \, w^3.
\end{equation}
The proof consists of two steps. In the first step we prove that $w(\tau)$ is monotone near the blow up time $T$. In the second step we derive a contradiction by applying a suitable comparison principle.

\emph{Case 1}: $\lim_{j \rightarrow \infty} w(\tau_j) = \infty$. We claim $p(\tau) > 0$ for all $\tau$ with sufficiently small positive $T - \tau$, which in particular implies 
\begin{equation} \label{positive}
\lim_{\tau \nearrow T}w(\tau) = \infty.
\end{equation}
Indeed, towards a contradiction to this claim, suppose that $p(\tau)$ is not positive for all $\tau$ with sufficiently small positive $T - \tau$. Since $\lim_{j \rightarrow \infty} w(\tau_j) = \infty$, there exists an increasing sequence $\{\xi_j\}_{j \in \mathbb{N}}$ with $\lim_{j \rightarrow \infty} \xi_j = T$ such that $w(\xi_j) > 0$, $\lim_{j \rightarrow \infty} w(\xi_j) = \infty$, $p(\xi_j) = 0$, and $\dot{p}(\xi_j ) \le 0$. However, by \eqref{p-vec} we know 
\begin{equation}
\dot{p}(\xi_j) = -\lambda \, a^2(s(\xi_j)) \, w(\xi_j) + \lambda \, a^2(s(\xi_j)) \, e^{2m \xi_j} \, w^3(\xi_j) > 0
\end{equation}
for sufficiently large $j \in \mathbb{N}$, which is a contradiction to $\dot{p}(\xi_j) \le 0$. 

\emph{Case 2}: $\lim_{j \rightarrow \infty} p(\tau_j) = \infty$. There are two further cases. First, if $\dot{p}(\tau) > 0$ for all $\tau$ with sufficiently small positive $T - \tau$, then $\lim_{\tau \nearrow T}p(\tau) = \infty$ due to $\lim_{j \rightarrow \infty} p(\tau_j) = \infty$, which together with \eqref{p-vec} implies \eqref{positive}. Second, if $\dot{p}(\tau)$ is not positive for all $\tau$ with sufficiently small positive $T - \tau$, then there exists an increasing sequence $\{\zeta_j\}_{j \in \mathbb{N}}$ with $\lim_{j \rightarrow \infty} \zeta_j = T$ such that $p(\zeta_j) > 0$, $\lim_{j \rightarrow \infty} p(\zeta_j) = \infty$, and $\dot{p}(\zeta_j) = 0$. Hence substituting $\zeta_j$ into \eqref{p-vec} implies $\lim_{j \rightarrow \infty} w(\xi_j) = \infty$, which is reduced to Case 1.

\emph{Case 3}: Either $\lim_{j \rightarrow \infty} w(\tau_j) = - \infty$ or $\lim_{j \rightarrow \infty} p(\tau_j) = -\infty$. Since $(w(\tau),p(\tau),s(\tau))$ is a solution of \eqref{odesys-w} if and only if $(-w(\tau),-p(\tau),s(\tau))$ is a solution, we can slightly modify the proof in Case 1 and 2 to obtain the monotonicity of $w(\tau)$ and in particular
\begin{equation} \label{negative}
\lim_{\tau \nearrow T} w(\tau) = -\infty.
\end{equation}
Next, we derive a contradiction for Case 1 and 2. We compare \eqref{p-vec} with the following \textit{autonomous damped} second-order ODE: 
\begin{equation} \label{compare-ode}
\begin{aligned}
\dot{\tilde{w}} &= \tilde{p}, \\
\dot{\tilde{p}} &=  h_2(\tilde{w}, \tilde{p}) := - 2 \, m \, \tilde{p} - \tilde{w} + \lambda \, a_M^2 \, e^{2 m T} \, \tilde{w}^3, 
\end{aligned}
\end{equation}
where $a_M := \max_{s \in [0, s_*]} a(s)$. Notice that solutions of \eqref{compare-ode} exist globally by phase portrait analysis. Observe that 
\begin{equation} \label{com-ineq}
h_1(\tau, w, p) < h_2(w,p)
\end{equation}
for all $\tau \in (-\infty, T)$, sufficiently large $w > 0$, and $p \in \mathbb{R}$. 
Due to \eqref{positive}, there is a $\tau_0$ with sufficiently small positive $T - \tau_0$ such that \eqref{com-ineq} holds as we substitute the solution of \eqref{odesys-w} for $\tau \in (\tau_0, T)$ into $h_1$ and $h_2$. We can always choose an initial condition $(\tilde{w}(\tau_0), \tilde{p}(\tau_0))$ for the ODE \eqref{compare-ode} such that $w(\tau_0) \le \tilde{w}(\tau_0)$ and $p(\tau_0) \le \tilde{p}(\tau_0)$. Then the comparison principle in \cite[Theorem 1]{JaSc66} ensures $w(\tau) \le \tilde{w}(\tau)$ for $\tau \in (\tau_0, T)$, which contradicts to the assumption of finite time blow up, since $\tilde{w}(\tau)$ exists globally.

For a contradiction to Case 3, due to \eqref{negative} we still compare \eqref{p-vec} with \eqref{compare-ode}, but we reverse the sign of \eqref{com-ineq} and the inequality between initial conditions at $\tau = \tau_0$.

If in addition $\partial \mathcal{M}$ is empty, then the ODE systems \eqref{odesys-w} and \eqref{odesys-z} are symmetric with respect to the \textit{time reversal symmetry} $\tau \mapsto -\tau$. With such a symmetry, the previous proof is valid and the global existence of solutions also holds for \eqref{odesys-z}
\end{proof}
}

\textcolor{black}{Because of the global existence in Lemma \ref{lem-global-w}, we are able to} define the \emph{unstable shooting curve} as the section of the unstable manifold $M^u$ for any fixed $\hat{s} \in [0, s_*]$, namely
\begin{equation}\label{ShootingCurve}
    M^u_{\hat{s}} := M^u \cap \{(w,p, \hat{s}) \in \mathbb{R}^3\}.
\end{equation}
This is a smooth simple curve parametrized by $d\in\mathbb{R}$. If $\partial \mathcal{M}$ is empty, then similarly we define the \emph{stable shooting curve} $M^s_{\hat{s}}$ parametrized by $\tilde{d}\in\mathbb{R}$. 

The shooting manifolds characterize equilibria, their Morse indices, and zero numbers; see \cite[Lemma 2.4]{LappicySing}. For the case $\partial \mathcal{M}$ being empty, the set of solutions of (\ref{cheq1}) is in one-to-one correspondence with $M^u_{s_*/2}\cap M^s_{s_*/2}$. Moreover, a solution corresponding to fixed $d\in\mathbb{R}$ and $\tilde{d}\in\mathbb{R}$ is hyperbolic if and only if $W^u(d,0,0)$ intersects $W^s(\tilde{d},0,s_*)$ transversely. Similarly, when $\partial \mathcal{M}$ is nonempty, the set of solutions of \eqref{cheq1} is in one-to-one correspondence with $M^u_{s_*} \cap L_{s_*}^{\alpha_1,\alpha_2}$. Moreover, a solution corresponding to a fixed $d\in\mathbb{R}$ is hyperbolic if and only if $W^u(d,0,0)$ intersects $L_{s_*}^{\alpha_1,\alpha_2}$ transversely.

\subsection{Monotonicity}\label{subsec:monot}

To construct the unstable shooting manifold $M^u$ of the ODE system (\ref{odesys-w}), due to the symmetry that $(w,p,s)$ is a solution of (\ref{odesys-w}) if and only if $(-w,-p,s)$ is also a solution, it suffices to consider $d > 0$. Similarly, if $\partial \mathcal{M}$ is empty, it suffices to consider $\tilde{d} >0$ for obtaining the stable shooting manifold $M^s$ of the ODE system (\ref{odesys-z}).

Furthermore, since we only focus on hyperbolicity of vortex equilibria, it suffices to consider $d \in (0, d_{\lambda})$ for any fixed $\lambda > \lambda_0$. \textcolor{black}{Here $d_\lambda > 0$ is the shooting parameter that yields the intersection point between two shooting curves (if $\partial \mathcal{M}$ is empty) or between the unstable shooting curve and the Robin line (if $\partial \mathcal{M}$ is nonempty), which corresponds to the vortex equilibrium with positive amplitude. Note that the existence of such a vortex equilibrium has already been proved in \cite[Lemma 3.8]{Da20}}. We will study how the unstable shooting manifold $M^u$ winds around the line of trivial equilibria $\{(0,0,s) \in \mathbb{R}^3\}$. 

More precisely, in polar coordinates with clockwise angle
\begin{equation} \label{polarform}
  (w,p) =(\rho\cos(\mu),-\rho\sin(\mu)),  
\end{equation}
the ODE system \eqref{odesys-w} reads
\begin{align} \label{polarCI}
\dot{\rho} &= \rho \sin(\mu)\cos(\mu)\big( \lambda \, a^2(s) (1-e^{2m\tau(s)}\rho^2 \cos^2(\mu))-1 \big)-2 \, m \, \rho \sin^2(\mu), \nonumber
\\
\dot{\mu} &= \sin^2(\mu)+ \lambda \, a^2(s) \cos^2(\mu) \big(1- e^{2m\tau(s)}{\rho}^2 \cos^2(\mu)\big) -2 \,m \,\sin(\mu)\cos(\mu),
\\
\dot{s} &= a(s). \nonumber
\end{align}
The Neumann data $(w,p) = (d,0)$ at $\tau = -\infty$ reads
\begin{equation}
\lim_{\tau \rightarrow -\infty} \mu(\lambda,\tau)=0
\end{equation}
for each fixed $\lambda > 0$. We adapt the idea in \cite{Hale99, LappicySing, Rocha85} to prove that the radius function $\rho$ and the angle function $\mu$ are monotone with respect to the shooting parameter $d \in (0, d_\lambda)$.

\begin{lemma}\label{lem:monot}
For each fix $\lambda > 0$, let $(\rho,\mu)$ and $(\tilde{\rho},\tilde{\mu})$ be solutions of \eqref{polarCI} with different Neumann data
\begin{equation}
   \lim_{\tau \rightarrow -\infty}(\rho(\tau),\mu(\tau))=(d,0), \quad \lim_{\tau \rightarrow -\infty}(\tilde{\rho}(\tau),\tilde{\mu}(\tau))=(\tilde{d},0),
\end{equation} 
where $0<d<\tilde{d} < d_\lambda$. Then
    \begin{equation}\label{mumonot}
        \mu(\tau)>\tilde{\mu}(\tau)    
    \end{equation}
    and
        \begin{equation}\label{rhomonot}
        \rho(\tau)< \tilde{\rho}(\tau)    
    \end{equation}
    for all $\tau\in (-\infty, \tau_*)$. 
\end{lemma}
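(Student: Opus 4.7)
I would argue by contradiction using a first-contact time for the combined inequality $\mu>\tilde\mu$ and $\rho<\tilde\rho$, in the spirit of the Pr\"ufer/Sturm analysis used in \cite{Hale99, LappicySing, Rocha85}. The near-$\tau=-\infty$ asymptotics initialize both inequalities, while uniqueness of the $(w,p)$-system \eqref{odesys-w} constrains the way they can first fail.

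\textbf{Initialization.} Using \eqref{exp1}--\eqref{g1} together with $a(s)\sim C\,e^\tau$ as $\tau\to-\infty$, I expand the angle equation \eqref{polarCI} to two leading orders. The leading behaviour $\mu(\tau)\sim\frac{\lambda C^2}{2+2m}\,e^{2\tau}$ is $d$-independent, while the next-order correction $-\frac{\lambda C^2 d^2}{2+4m}\,e^{(2+2m)\tau}$, arising from the nonlinear term $-\lambda a^2 e^{2m\tau}\rho^2\cos^4\mu$ in \eqref{polarCI}, carries the $d$-dependence. Subtracting yields $(\mu-\tilde\mu)(\tau)\sim\frac{\lambda C^2(\tilde d^2-d^2)}{2+4m}\,e^{(2+2m)\tau}>0$ for $\tau$ sufficiently negative. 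Together with $\rho(\tau)\to d$ and $\tilde\rho(\tau)\to\tilde d$ where $d<\tilde d$, both strict inequalities hold in a neighbourhood of $-\infty$.

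\textbf{First-contact argument.} Let $\tau_0:=\inf\{\tau\in(-\infty,\tau_*):\mu(\tau)\leq\tilde\mu(\tau)\ \text{or}\ \rho(\tau)\geq\tilde\rho(\tau)\}$. By initialization, $\tau_0>-\infty$. By continuity, $\mu(\tau_0)\geq\tilde\mu(\tau_0)$ and $\rho(\tau_0)\leq\tilde\rho(\tau_0)$ with at least one equality; simultaneous equality would give $(w,p)(\tau_0)=(\tilde w,\tilde p)(\tau_0)$, and by uniqueness of solutions to \eqref{odesys-w} the full trajectories would coincide, contradicting $d\neq\tilde d$. In the main case $\mu(\tau_0)=\tilde\mu(\tau_0)=:\mu_0$ with $\rho(\tau_0)<\tilde\rho(\tau_0)$, the $\rho$-dependence of the angle equation enters only through $-\lambda a^2 e^{2m\tau}\rho^2\cos^4\mu$, giving
\[
(\dot\mu-\dot{\tilde\mu})(\tau_0)=\lambda\,a^2(s(\tau_0))\,e^{2m\tau_0}\cos^4\mu_0\bigl[\tilde\rho(\tau_0)^2-\rho(\tau_0)^2\bigr]>0
\]
whenever $\cos\mu_0\neq 0$. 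This contradicts $(\mu-\tilde\mu)'(\tau_0)\leq 0$, which follows from $\mu-\tilde\mu$ being strictly positive just before $\tau_0$ and zero at $\tau_0$.

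\textbf{Remaining cases and main obstacle.} Two degeneracies require more care: (i) the radius-coincidence case $\rho(\tau_0)=\tilde\rho(\tau_0)$ with $\mu(\tau_0)>\tilde\mu(\tau_0)$; and (ii) the subcase $\cos\mu_0=0$, i.e.\ simultaneous zeros $w(\tau_0)=\tilde w(\tau_0)=0$. For both I would introduce the Wronskian
\[
W(\tau):=w(\tau)\dot{\tilde w}(\tau)-\tilde w(\tau)\dot w(\tau)=\rho(\tau)\tilde\rho(\tau)\sin(\mu(\tau)-\tilde\mu(\tau)),
\]
which satisfies the linear identity $\dot W+2mW=\lambda a^2(s)\,e^{2m\tau}\,w\tilde w(\tilde w^2-w^2)$. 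The positivity of $W$ on $(-\infty,\tau_0)$ established by the initialization, combined with this identity, couples the two inequalities and should rule out case (i); for (ii), a higher-order Taylor expansion of $w-\tilde w$ at the common zero $\tau_0$ (using \eqref{cheq3} to compute the coefficients) should break the tie. The main obstacle is precisely these non-generic coincidences, where the first-order comparison degenerates; the hypothesis $d<d_\lambda$, which keeps the solutions below the already-hyperbolic positive vortex branch, is expected to be crucial in closing these remaining estimates.
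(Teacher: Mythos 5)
Your route is genuinely different from the paper's. You argue pointwise at a first-contact time: at the first $\tau_0$ where the angles coincide with $\cos\mu_0\neq 0$ and $\rho(\tau_0)<\tilde\rho(\tau_0)$, the sign of $(\dot\mu-\dot{\tilde\mu})(\tau_0)$ read off from the $\mu$-equation of \eqref{polarCI} contradicts the first-touching condition; this is correct (note $a(s(\tau_0))>0$ for finite $\tau_0$) and is cleaner than what the paper does. The paper instead proves the two monotonicities \emph{separately} and never differentiates at the contact point: it integrates the $\mu$-equation between the last coincidence time $\tau_2$ and the hypothetical crossing time $\tau_1$, uses only Lipschitz continuity of the vector field $F$, and closes with a Gr\"onwall-type integral inequality (after absorbing the $|\tilde\rho-\rho|$ contribution via a mean-value-theorem trick). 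That integral argument is precisely what makes the paper immune to the degeneracies you flag: it does not care whether $\cos\mu_0=0$ or which of the two quantities touches first. Your initialization by explicit asymptotic expansion is also a substitute for the paper's argument, which uses the variational equation \eqref{vareq}, the comparison \eqref{com-1}, and topological conjugacy near the nonhyperbolic equilibrium $(d,0,0)$; both are workable.

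The genuine gap is in your "remaining cases". Case (ii) ($\cos\mu_0=0$) can indeed be closed by your Wronskian: near a simultaneous zero the forcing $\lambda a^2e^{2m\tau}w\tilde w(\tilde w^2-w^2)\sim c(\tau-\tau_0)^4$ with $c=\lambda a^2e^{4m\tau_0}\rho_0\tilde\rho_0(\tilde\rho_0^2-\rho_0^2)>0$, so $(e^{2m\tau}W)'\geq 0$ on a left neighborhood of $\tau_0$, forcing $W\leq 0$ there and contradicting $W=\rho\tilde\rho\sin(\mu-\tilde\mu)>0$. But case (i), the radius coincidence $\rho(\tau_0)=\tilde\rho(\tau_0)$ with $\mu(\tau_0)>\tilde\mu(\tau_0)$, is not closed by anything you write: the $\rho$-equation of \eqref{polarCI} depends on $\mu$ through $\sin\mu\cos\mu$ and $\sin^2\mu$ with no definite sign, so at a radius first-contact the difference $(\dot\rho-\dot{\tilde\rho})(\tau_0)$ has no usable sign, and the Wronskian identity controls only the angular difference, not the radial one. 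This is exactly where a pointwise comparison structurally fails and where the paper's Lipschitz--Gr\"onwall integral argument (applied to $|\tilde\rho-\rho|$ with the roles of $\rho$ and $\mu$ exchanged, as in the paper's "two mild adaptations") is needed. Also, the hypothesis $d<d_\lambda$ plays no role in closing these cases in either argument, contrary to your expectation; it only serves to restrict attention to the relevant range of shooting parameters. As written, your proof establishes the angle monotonicity \eqref{mumonot} modulo the degenerate subcase, but does not establish the radius monotonicity \eqref{rhomonot}.
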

\begin{proof}[\textbf{Proof}]
Define $F :\mathbb{R}^3\to \mathbb{R}^3$, $F = F(\rho,\mu,s)$, whose $j$-th coordinate function $F_j$ corresponds to the $j$-th line of the right-hand side in \eqref{polarCI}. Clearly, $F$ is Lipschitz continuous.

We first prove the nonstrict inequality:
\begin{equation}\label{nonstrct}
    \mu(\tau)\geq\tilde{\mu}(\tau)  \quad \mbox{for all   } \tau\in(-\infty,\tau_*).
\end{equation} 
Suppose towards a contradiction that 
\begin{equation}\label{contrtau1}
    \mu(\tau_1)<\tilde{\mu}(\tau_1) \quad \mbox{for some    } \tau_1\in(-\infty,\tau_*). 
\end{equation} 
Let $w = w(\tau, d)$ be the solution of (\ref{odesys-w}) with the shooting parameter $d > 0$. The variational equation for $y := w_d$ is given by 
\begin{equation}\label{vareq}
\ddot{y} + 2\, m \,\dot{y} + \lambda \, a^2(s) (1- 3 \, e^{2m\tau} w^2) \, y = 0.
\end{equation}

In polar coordinates, the associated angle function $\vartheta$ of $y$ satisfies
\begin{align}  \label{theta-eq}
\dot{\vartheta} &= \sin^2(\vartheta) + \lambda \, a^2(s) \, \cos^2(\vartheta) (1 - 3 \, e^{2m \tau(s)} w^2 ) - 2\, m \, \sin(\vartheta)\cos(\vartheta)
\\&
=: f(\vartheta, s, w). \nonumber
\end{align}
Clearly, 
\begin{equation} \label{com-1}
f(\vartheta, s, d) > f(\vartheta, s, \tilde{d})
\end{equation}
holds in some neighborhood of $(\vartheta, s) = (0, 0)$ for $0<d<\tilde{d}$.

Note that the equation \eqref{theta-eq} describes the angle of the tangent vector of the shooting curve for the shooting parameter $d > 0$. \textcolor{black}{Therefore, due to \eqref{com-1}, comparison of two solutions of \eqref{theta-eq} with different initial data $d<\tilde{d}$ implies $\vartheta(\tau,d)>\vartheta(\tau,\tilde{d})$. In other words, as the parametrization of the shooting curve given by $d > 0$ increases}, the angle of its tangent vector decreases.
Around the nonhyperbolic homogeneous equilibrium $(w,p,s)= (d,0,0)$, the semiflow generated by (\ref{odesys-w}) is \textcolor{black}{locally} topologically conjugate to the one generated by its associated linearization (see \cite{Shoshitaishvili71}), and moreover, by continuous dependence of the semiflow generated by (\ref{odesys-w}) with respect to $d > 0$, 
\eqref{theta-eq} and \eqref{com-1} imply
\begin{equation} \label{nearinf}
 \mu(\tau) > \tilde{\mu}(\tau) \quad \text{ for all } \tau \text{ near $-\infty$.}  
\end{equation}
Due to \eqref{contrtau1}, (\ref{nearinf}), and continuity of $\mu$, $\tilde{\mu}$ in $\tau$, there is some $\tau_2\in(-\infty,\tau_1)$ so that
\begin{equation} \label{w-comp}
\mu(\tau_2)=\tilde{\mu}(\tau_2), \,\, \mu(\tau) < \tilde{\mu}(\tau) \quad \text{ for all } \tau \in (\tau_2, \tau_1).
\end{equation}
Integrating the $\mu$-equation in (\ref{polarCI}) on $(\tau_2, \tau)$ with $\tau_2 < \tau \le \tau_1$ gives 
\begin{equation} \label{mueq-1}
\mu(\tau) - \mu(\tau_2) = \int_{\tau_2}^\tau F_2(\rho(\sigma), \mu(\sigma), s(\sigma)) \, \mathrm{d}\sigma,
\end{equation}
and similarly for $\tilde{\mu}$,
\begin{equation} \label{mueq-2}
\tilde{\mu}(\tau) - \tilde{\mu}(\tau_2) = \int_{\tau_2}^\tau F_2(\tilde{\rho}(\sigma), \tilde{\mu}(\sigma), s(\sigma)) \, \mathrm{d}\sigma.
\end{equation}
We consider the difference of (\ref{mueq-1}) and (\ref{mueq-2}), noticing (\ref{w-comp}) and Lipschitz continuity of $F_2$. Hence there exists a constant $c_1 = c_1(\tau_2, \tau) > 0$ such that the difference $\tilde{\mu} - \mu$ satisfies
\begin{equation}\label{diffmu}
   0 < \tilde{\mu}(\textcolor{black}{\tau})-\mu(\textcolor{black}{\tau})\leq c_1 \int_{\tau_2}^{\textcolor{black}{\tau}} \sqrt{|\tilde{\rho}(\sigma)-\rho(\sigma)|^2+|\tilde{\mu}(\sigma)-\mu(\sigma)|^2} \, \mathrm{d}\sigma.
\end{equation}
We define $c_2 = c_2(\tau_2, \tau) > 0$ such that $|\tilde{\rho}(\sigma) - \rho(\sigma)| < c_2$ for all $\sigma \in (\tau_2, \tau)$. Since the square root of a sum is less than the sum of the square roots, we have
\begin{equation} \label{diff}
\tilde{\mu}(\tau) - \mu(\tau) \leq c_1 \, c_2 \, (\tau-\tau_2) + c_1 \int_{\tau_2}^\tau \tilde{\mu}(\sigma) - \mu(\sigma) \, \mathrm{d}\sigma .
\end{equation}
The mean value theorem yields some $\tau_3 \in (\tau_2, \tau)$ such that 
\begin{equation} \label{mvt-tau}
\tau - \tau_2 = \frac{\int_{\tau_2}^\tau \tilde{\mu}(\sigma) - \mu(\sigma) \, \mathrm{d}\sigma}{\tilde{\mu}(\tau_3) - \mu(\tau_3)}.
\end{equation}
Note that the denominator is nonzero due to (\ref{w-comp}). For sufficiently small $\epsilon>0$, let
\begin{equation} \label{mep}
m_\epsilon :=\min_{s\in[\tau_2+\epsilon,\tau_1]} \big( \tilde{\mu}(s)-\mu(s) \big).
\end{equation}
Then $m_\epsilon \in (0,\infty)$ by continuity of $\tilde{\mu}$ and $\mu$, and also (\ref{w-comp}). Substituting (\ref{mvt-tau}) and (\ref{mep}) into (\ref{diff}) yields  
\begin{equation}\label{CUBA2}
   \tilde{\mu}(\textcolor{black}{\tau})-\mu(\textcolor{black}{\tau})\leq \bigg(\frac{c_1c_2}{m_\epsilon} +c_1 \bigg)\int_{\tau_2}^{\textcolor{black}{\tau}} (\tilde{\mu}(\sigma)-\mu(\sigma)) \, \mathrm{d}\sigma
\end{equation}
\textcolor{black}{for all $\tau\in [\tau_2+\epsilon,\tau_1]$.}

The integral Gr\"{o}nwall inequality implies $\tilde{\mu}(\tau)-\mu(\tau)\leq 0$ \textcolor{black}{for all $\tau\in [\tau_2+\epsilon,\tau_1]$, in particular for $\tau = \tau_1$}, which contradicts to the definition of $\tau_1$ in \eqref{contrtau1} and proves the nonstrict inequality \eqref{nonstrct}.

Next we prove the strict inequality \eqref{mumonot}. Suppose on the contrary that there exists a $\tau_4\in\mathbb{R}$ such that $\mu(\tau_4)=\tilde{\mu}(\tau_4)$. 

By (\ref{nearinf}) we can take $\tau_5 \in (-\infty, \tau_4)$ such that $\mu(\tau_5) > \tilde{\mu}(\tau_5)$. Note that the nonstrict inequality \eqref{nonstrct} holds for all $\tau\in(\tau_5,\tau_4)$.   
Integrating the $\mu$-equation of (\ref{odesys-w}) backwards from $\tau_4$ to $\tau_5$ through the transformation $\tilde{\tau}:=\tau_4+\tau_5-\tau$ yields
\begin{equation}
   \mu(\tau_5)-\mu(\tau_4)= \int_{\tau_4}^{\tau_5} F_2(\rho(\sigma),\mu(\sigma),s(\sigma)) \, \mathrm{d} \sigma,
\end{equation} 
with similar equality for $\tilde{\mu}$.

Hence, the same method from \eqref{diffmu} to \eqref{CUBA2} above can be applied for the difference ${\mu}({\tau})-\tilde{\mu}({\tau})$, yielding the inequality ${\mu}({\tau}_5)-\tilde{\mu}({\tau}_5)\leq 0$. This contradicts to the definition of $\tau_5$ and proves the strict inequality \eqref{mumonot}.

Analogously, we can apply the above argument to prove the monotonicity (\ref{rhomonot}) of the radius function, with only two mild adaptations. First, we do not have to study the asymptotic behavior as $\tau \rightarrow -\infty$, since the shooting parameters are already ordered by $0 < d<\tilde{d}$. Second, we take an upper bound of $|\tilde{\mu}-\mu|$ in \eqref{diffmu}, and then apply the mean value theorem on $|\tilde{\rho}-\rho|$ in the analogous version of \eqref{diff}. The proof is complete.
\end{proof}

\subsection{Hyperbolicity: All intersections are transverse} \label{subsec:subpf}

When $\partial \mathcal{M}$ is nonempty, hyperbolicity is equivalent to transverse intersections between the shooting curve $M_{s_*}^u$ from \eqref{ShootingCurve} and the Robin line $L_{s_*}^{\alpha_1,\alpha_2}$ from \eqref{RobinLine} that describes Robin boundary conditions. 

In case that the boundary conditions are not of the Dirichlet type, that is, $\alpha_2\neq 0$, in order to describe whether the shooting curve is tangent to the line $L_{s_*}^{\alpha_1, \alpha_2}$ easily, we rotate the horizontal $w$-axis to $L_{s_*}^{\alpha_1, \alpha_2}$ by the constant angle 
\begin{equation}
\theta:=\arctan \left(-\frac{\alpha_1 \, a(s_*)+\alpha_2 \, m}{\alpha_2} \right).
\end{equation}

In other words, we rotate the original polar coordinates $(w,p) = (\rho \cos(\mu), -\rho \sin(\mu))$, as in (\ref{polarform}), by defining the new polar coordinates
\begin{equation} \label{new-polar}
    (\tilde{w},\tilde{p}):=(\rho \cos(\tilde{\mu}),-\rho \sin(\tilde{\mu})),
\end{equation}
where
\begin{equation}
    \tilde{\mu} := \mu - \theta .
\end{equation}

It suffices to prove that the shooting curve in the new coordinates $(\tilde{w}, \tilde{p}, s_*)$ intersects the $\tilde{w}$-axis transversely.

In case of Dirichlet boundary conditions, that is, $\alpha_2=0$, there is no need for introducing the rotation and thus we simply let $\theta =0$.

The tangent vector of the shooting curve is given by $(\tilde{w}_d(\tau_*), \tilde{p}_d(\tau_*))$. Suppose that there exists a tangent vector parallel to the $\tilde{w}$-axis, that is, $\tilde{p}_d(\tau_*) = 0$ for some $d \in (0, d_\lambda)$. Then (\ref{new-polar}) implies 
\begin{equation} \label{var-eq-tilde}
    0=-\rho_d(\tau_*) \sin(\tilde{\mu}(\tau_*)) - \rho(\tau_*) \cos(\tilde{\mu}(\tau_*))\tilde{\mu}_d(\tau_*).
\end{equation}

Monotonicity in Lemma \ref{lem:monot} and the uniqueness theorem of ODE initial value problems yield
\begin{equation} \label{dif-sig}
\rho_d(\tau_*) > 0, \quad \tilde{\mu}_d(\tau_*) < 0
\end{equation}
for all $d \in (0, d_\lambda)$. 

Since $\rho(\tau_*) > 0$, we see that both $\sin(\tilde{\mu}(\tau_*)$ and $\cos(\tilde{\mu}(\tau_*))$ are nonzero. We divide (\ref{var-eq-tilde}) by $\cos(\tilde{\mu}(\tau_*))$ and obtain
\begin{equation}
\tilde{\mu}(\tau_*) = \arctan \left(-\frac{\rho(\tau_*)\tilde{\mu}_d(\tau_*)}{\rho_d(\tau_*)} \right).
\end{equation}

Since $\tilde{\mu}_d(\tau_*)$ and $\rho_d(\tau_*)$ have different signs, as in (\ref{dif-sig}), the angle of the shooting curve satisfies $\tilde{\mu}(\tau_*) \in (0,\pi/2)$. This means that if the tangent vector of a point on the shooting curve is parallel to the $\tilde{w}$-axis, then such a point lies outside the $\tilde{w}$-axis and the $\tilde{p}$-axis. Hence, the shooting curve intersects the $\tilde{w}$-axis and the $\tilde{p}$-axis transversely.

When $\partial \mathcal{M}$ is empty, note that $M^u$ and $M^s$ have different coordinates given by $(w,p,s)$ and $(z,q,s)$, respectively. We unify those coordinates into a single nomenclature, namely, we denote the horizontal axis to be either the $w$-axis for $M^u$ or the $z$-axis for $M^s$. Similarly, we denote the vertical axis to be either the $p$-axis for $M^u$ or the $q$-axis for $M^s$. The reflectional symmetry (\ref{refsym}) admits the time reversal symmetry $\tau \mapsto -\tau$, which implies that $(w(\tau),p(\tau),s(\tau))$ is a solution of (\ref{odesys-w}) if and only if $(w(-\tau),-w(-\tau),s_*-s(-\tau))$ is a solution of (\ref{odesys-z}). Hence the stable shooting manifold $M^s$ of (\ref{odesys-z}) is simply a reflection of the unstable shooting manifold $M^u$ of (\ref{odesys-w}) with respect to the vertical axis. 

Due to the time reversal symmetry, the intersection points between $M^u$ and $M^s$, which yield vortex equilibria, are on either the horizontal axis or the vertical axis; also see \cite[Lemma 3.5 (iii)]{Da20}. Hence for hyperbolicity it suffices to prove that both shooting curves $M^u_{s_*/2}$ and $M^s_{s_*/2}$ are not tangent to the horizontal axis and the vertical axis. Indeed, the time reversal symmetry implies that at each intersection point, $M^u_{s_*/2}$ is tangent to the axis if and only if $M^s_{s_*/2}$ also does.  

Consequently, we have reduced the proof to showing that the shooting curve $M^u_{s_*/2}$ is not tangent to the horizontal axis and the vertical axis. Such a proof follows directly by the one above, for the case $\partial \mathcal{M}$ being nonempty. \textcolor{black}{See Figure \ref{FIGshoot} for a schematic shape of shooting curves.}

\begin{figure}[H]
\minipage{0.3\textwidth}\centering
\begin{tikzpicture}[scale=1]
    \draw[->] (-1.5,0) -- (1.5,0) node[right] {$w$};
    \draw[->] (0,-1) -- (0,1) node[above] {$p$};
    
    \filldraw [black] (-1,0) circle (1pt);
    \filldraw [black] (1,0) circle (1pt);
    \filldraw [black] (0,0) circle (1pt);

    \draw [domain=-1.23:1.23,variable=\t,smooth] plot ({\t},{\t*(\t-1)*(\t+1)}) node[anchor= west]{$M^u_{s_*/2}$};     
    
    \draw [domain=-1.23:1.23,variable=\t,smooth, dashed] plot ({\t},{-\t*(\t-1)*(\t+1)}) node[anchor= west]{$M^s_{s_*/2}$};  
\end{tikzpicture}
\endminipage\hfill
\minipage{0.3\textwidth}\centering
\begin{tikzpicture}[scale=0.335]
    \draw[->] (-4.5,0) -- (4.5,0) node[right] {$w$};
    \draw[->] (0,-3) -- (0,3) node[above] {$p$};

    \filldraw [black] (-3.14,0) circle (3pt);
    \filldraw [black] (3.14,0) circle (3pt);
    \filldraw [black] (0,0) circle (3pt);
    \filldraw [black] (0,-1.57) circle (3pt);
    \filldraw [black] (0,1.57) circle (3pt);
    
    \draw [domain=0:3.14,variable=\t,smooth] plot ({\t*cos(\t r)},{\t*sin(\t r)}); 
    \draw [domain=-3.6:-3.14,variable=\t,smooth] plot ({\t},{-(0.7)*(\t-3.14)*(\t+3.14)}); 
    \draw [domain=0:3.14,variable=\t,smooth] plot ({-\t*cos(\t r)},{-\t*sin(\t r)}); 
    \draw [domain=3.14:3.6,variable=\t,smooth] plot ({\t},{(0.7)*(\t-3.14)*(\t+3.14)})node[anchor= west]{$M^u_{s_*/2}$}; 

    \draw [domain=0:3.14,variable=\t,smooth,dashed] plot ({\t*cos(\t r)},{-\t*sin(\t r)});
    \draw [domain=-3.6:-3.14,variable=\t,smooth,dashed] plot ({\t},{(0.7)*(\t-3.14)*(\t+3.14)}); 
    \draw [domain=0:3.14,variable=\t,smooth,dashed] plot ({-\t*cos(\t r)},{\t*sin(\t r)}); 
    \draw [domain=3.14:3.6,variable=\t,smooth,dashed] plot ({\t},{-(0.7)*(\t-3.14)*(\t+3.14)})node[anchor= west]{$M^s_{s_*/2}$};  
\end{tikzpicture}\vspace*{0.2cm}
\endminipage\hfill
\minipage{0.3\textwidth}\centering
\begin{tikzpicture}[scale=0.21]
    \draw[->] (-7.5,0) -- (7.5,0) node[right] {$w$};
    \draw[->] (0,-5) -- (0,5) node[above] {$p$};
    
    \filldraw [black] (-4.71,0) circle (4pt);
    \filldraw [black] (4.71,0) circle (4pt);
    \filldraw [black] (0,0) circle (4pt);
    \filldraw [black] (0,-3.14) circle (4pt);
    \filldraw [black] (0,3.14) circle (4pt);
    \filldraw [black] (-1.57,0) circle (4pt);
    \filldraw [black] (1.57,0) circle (4pt);
    
    \draw [domain=0:4.71,variable=\t,smooth] plot ({\t*sin(\t r)},{-\t*cos(\t r)}); 
    \draw [domain=-5.2:-4.71,variable=\t,smooth] plot ({\t},{-(0.7)*(\t-4.71)*(\t+4.71)}); 
    \draw [domain=0:4.71,variable=\t,smooth] plot ({-\t*sin(\t r)},{\t*cos(\t r)}); 
    \draw [domain=4.71:5.2,variable=\t,smooth] plot ({\t},{(0.7)*(\t-4.71)*(\t+4.71)}) node[anchor= west]{$M^u_{s_*/2}$}; 

    \draw [domain=0:4.71,variable=\t,smooth,dashed] plot ({\t*sin(\t r)},{\t*cos(\t r)}); 
    \draw [domain=-5.2:-4.71,variable=\t,smooth,dashed] plot (-{\t},{-(0.7)*(\t-4.71)*(\t+4.71)}); 
    \draw [domain=0:4.71,variable=\t,smooth,dashed] plot ({-\t*sin(\t r)},{-\t*cos(\t r)}); 
    \draw [domain=4.71:5.2,variable=\t,smooth,dashed] plot (-{\t},{(0.7)*(\t-4.71)*(\t+4.71)}); 
    \filldraw [black] (5,-4) circle (0.01pt)    node[anchor= west]{$M^s_{s_*/2}$};
    
    \end{tikzpicture}
\vspace*{0.25cm}
\endminipage
\caption{From left to right, when $\partial \mathcal{M}$ is empty, the shooting curves $M^u_{s_*/2}$  \textcolor{black}{(solid curve)} and $M^s_{s_*/2}$ \textcolor{black}{(dashed curve)} generated by (\ref{odesys-w}) and (\ref{odesys-z}) for $\lambda \in (\lambda_0, \lambda_1)$, $\lambda\in(\lambda_1,\lambda_2)$, and $\lambda\in(\lambda_2,\lambda_3)$, respectively. \textcolor{black}{The schematic shape of these curves is rigorously obtained by the monotonicity of both the radius and angle of each curve in polar coordinates (see Lemma \ref{lem:monot}) and the time reversal symmetry $\tau \mapsto -\tau$.} 
} \label{FIGshoot}
\end{figure}

Therefore, all nontrivial vortex equilibria are hyperbolic. The trivial equilibirum, that is, when $d = 0$ and thus $\rho \equiv 0$, is a tangent intersection point only at the bifurcation point $\lambda=\lambda_k$, where $\lambda_k$ denotes the $k$-th eigenvalue of $-\Delta_{\mathcal{M}}$ restricted to $L_m^2(\mathbb{R})$.

\textbf{Acknowledgment.} This collaboration arose from the pleasant office sharing while the authors did their PhD in Berlin. For that we are grateful for Bernold Fiedler. Jia-Yuan Dai was supported by NCTS grant number 107-2119-M-002-016 and sunshine in S\~{a}o Carlos. PL was supported by FAPESP, 2017/07882-0, 18/18703-1, and a free lunch after winning a bet regarding a proof in this paper.

\end{document}